\newtheorem{lemma}{Lemma}
\newtheorem{proposition}{Proposition}
\newtheorem{corollary}{Corollary}
\newtheorem{theorem}{Theorem}
\theoremstyle{definition}
\newtheorem{problem}{Problem}
\newtheorem{example}{Example}
\theoremstyle{remark}
\newtheorem{remark}{Remark}
\DeclareMathOperator{\charac}{char}
\DeclareMathOperator{\disc}{disc}
\DeclareMathOperator{\GL}{GL}
\DeclareMathOperator{\End}{End}
\DeclareMathOperator{\cl}{cl}
\DeclareMathOperator{\tr}{tr}
\DeclareMathOperator{\res}{res}
\DeclareMathOperator{\val}{val}
\newcommand{\Ell}{\mathcal{E}\hspace{-0.065cm}\ell\hspace{-0.035cm}\ell}
\newcommand{\Z}{\mathbb{Z}}
\newcommand{\Q}{\mathbb{Q}}
\newcommand{\C}{\mathbb{C}}
\newcommand{\OO}{\mathcal{O}}
\newcommand{\FF}{\mathcal{F}}
\newcommand{\fraka}{\mathfrak{a}}
\newcommand{\frakb}{\mathfrak{b}}
\newcommand{\frakc}{\mathfrak{c}}
\newcommand{\F}{\mathbb{F}}
\mathchardef\mhyphen="2D
\newcommand{\mathsc}[1]{{\normalfont\textsc{#1}}}
\newcommand{\probname}[1]{{\mathsc{#1}}}
\newcommand{\EndRing}{\probname{EndRing}}
\newcommand{\OEndRing}[1][\OO]{{#1}\mhyphen\probname{EndRing}} 
\newcommand{\strongOEndRing}[1][\OO]{\OEndRing[{#1}]^*} 
\newcommand{\strongOTransform}[1][\OO]{\probname{Effective}\text{ }\allowbreak{#1}\mhyphen\probname{Vectorization}}
\newcommand{\strongOUber}[1][\OO]{\probname{Effective}\text{ }\allowbreak{#1}\mhyphen\probname{Uber}}
\newcommand{\SSO}[1][\OO]{\Ell_{\OO}(k)}
\title[On DDH for class group actions on oriented elliptic curves]{On the decisional Diffie--Hellman problem for class group actions on oriented elliptic curves}
\author[W.\ Castryck]{Wouter Castryck}
\address{imec-COSIC, KU Leuven, Kasteelpark Arenberg 10/2452, 3001 Leuven, Belgium \vspace{-0.25cm}}
\address{Dept.\ Mathematics: Algebra and Geometry, Ghent University, Krijgslaan 281, 9000 Gent, Belgium}
\email{wouter.castryck@esat.kuleuven.be}
\author[M.\ Houben]{Marc Houben}
\address{imec-COSIC, KU Leuven, Kasteelpark Arenberg 10/2452, 3001 Leuven, Belgium \vspace{-0.25cm}}
\address{Dept.\ Mathematics, KU Leuven, Celestijnenlaan 200B, 3001 Leuven, Belgium \vspace{-0.25cm}}
\address{Dept.\ Mathematics, Leiden Univ., Niels Bohrweg 1, 2333 CA Leiden, The Netherlands}
\email{marc.houben@kuleuven.be}
\author[F.\ Vercauteren]{Frederik Vercauteren}
\address{imec-COSIC, KU Leuven, Kasteelpark Arenberg 10/2452, 3001 Leuven, Belgium}
\email{frederik.vercauteren@esat.kuleuven.be}
\author[B.\ Wesolowski]{Benjamin Wesolowski}
\address{Univ.\ Bordeaux, CNRS, Bordeaux INP, IMB, UMR 5251, F-33400 Talence, France \vspace{-0.25cm}}
\address{INRIA, IMB, UMR 5251, F-33400 Talence, France}
\email{benjamin.wesolowski@math.u-bordeaux.fr}
\begin{document}

\maketitle

\begin{abstract}
   We show how the Weil pairing can be used to evaluate the assigned characters of an imaginary quadratic order $\OO$ in an unknown ideal class $[\fraka] \in \cl(\OO)$ that connects two given $\OO$-oriented elliptic curves $(E, \iota)$ and $(E', \iota') = [\fraka](E, \iota)$. 
When specialized to ordinary elliptic curves over finite fields, our method is conceptually simpler and often somewhat faster than a recent approach due to Castryck, Sot\'akov\'a and Vercauteren, who rely on the Tate pairing instead.   
   The main implication of our work is that it breaks the decisional Diffie--Hellman problem for practically all oriented elliptic curves that are acted upon by an even-order class group.
  It can also be used to better handle the worst cases in Wesolowski's recent reduction from the vectorization problem for oriented elliptic curves to the endomorphism ring problem, leading to a method that always works in sub-exponential time.
\end{abstract}

\section{Introduction}
This paper is primarily concerned with the \textsc{Decisional Diffie--Hellman problem} (DDH) for ideal class groups acting on oriented elliptic curves through isogenies. In order to state this problem precisely, we fix an
order $\OO$ in an imaginary quadratic number field $K$ along with an algebraically closed field $k$. A (primitive) \emph{$\OO$-orientation} on an elliptic curve $E$ over $k$ is an injective ring homomorphism $\iota : \OO \hookrightarrow \End(E)$ that cannot be extended to a superorder $\OO' \supsetneq \OO$ in $K$.
The set
\[ \Ell_{\OO}(k) = \{ \, (E, \iota) \, | \, \text{$E$ an elliptic curve over $k$ and $\iota$ an $\OO$-orientation on $E$} \,  \} / \cong, \]
if non-empty,
comes equipped with a free action
\begin{equation} \label{eq:action} 
\cl(\OO) \times \Ell_{\OO}(k) \longrightarrow \Ell_{\OO}(k) : ([\fraka], (E, \iota)) \longmapsto [\fraka](E, \iota)
\end{equation}
by the ideal class group  of $\OO$, see Section~\ref{sec:action} for details (including what it means for two $\OO$-oriented elliptic curves $(E, \iota)$ and $(E',\iota')$ to be isomorphic). 
Now assume that a party, say Eve, has unlimited access to samples from $\Ell_\OO(k)^3$ that are consistently of either of the following two forms:
\[ \begin{array}{lll} \big( \,  [\fraka](E, \iota), \, [\frakb](E, \iota), \, [\fraka][\frakb](E, \iota) \, \big) & & [\fraka], [\frakb] \stackrel{\$}{\leftarrow} \cl(\OO), \\
\big( \,  [\fraka](E, \iota), \, [\frakb](E, \iota), \, [\frakc](E, \iota) \, \big) & &
[\fraka], [\frakb], [\frakc] \stackrel{\$}{\leftarrow} \cl(\OO), \\
\end{array} \]
for some fixed and publicly known $(E, \iota)$.
Then Eve successfully solves DDH if she can guess, with non-negligible advantage, 
from which of these two distributions her triples were sampled.

The hardness of the decisional Diffie--Hellman problem is a natural security foundation for cryptographic constructions based on ideal class group actions, which trace back to the works of Couveignes~\cite{couveignes} and Rostovtsev--Stolbunov~\cite{rostovtsevstolbunov,stolphd} 
and
which have attracted much attention lately, in the context of post-quantum cryptography.
Here, one lets $k$ be an algebraic closure of a finite field, 
in which case all curves in $\Ell_\OO(k)$ can be defined over a common finite subfield $F \subseteq k$.
While the initial focus was on ordinary elliptic curves, whose orientations $\iota$ are just ring isomorphisms, most of the latest work is concerned with supersingular elliptic curves,
whose endomorphism rings are orders in a quaternion algebra and therefore leave room for a wide range of orientations. Here, we highlight supersingular elliptic curves defined over a finite prime field $\F_p$, which are naturally oriented by an order in $\Q(\sqrt{-p})$. The corresponding ideal class group actions underpin CSIDH~\cite{csidh} and spin-offs such as~\cite{csifish,threshold,bonehkoganwoo,OT}, and tend to yield more practical cryptosystems than in the ordinary case. 
More generally oriented supersingular elliptic curves made their first cryptographic appearance in the OSIDH protocol due to Col\`o and Kohel~\cite{osidh}. To date, this protocol remains largely theoretical, but it has attracted a good amount of recent interest, see e.g.~\cite{dartoisdefeo,onuki,Wes22a}. 

Our paper revisits the recent work~\cite{breakDDH}, which presents an efficient solution to DDH for essentially all ordinary elliptic curves over finite fields whose endomorphism ring has an even class number.
In more detail, as soon as there exists a non-trivial assigned character 
$\chi : \cl(\OO) \to \{ \pm 1 \}$ of sufficiently small modulus $m$, the attack from~\cite{breakDDH} allows Eve to compute
$\chi([\fraka])$ merely from the knowledge of
$(E, \iota)$ and $(E', \iota') = [\fraka](E, \iota)$, i.e., without knowing $[\fraka]$ itself. This 
indeed suffices to break DDH, since it allows her to check whether
$\chi([\frakc]) = \chi([\fraka])\chi([\frakb])$, which is true for $[\frakc] = [\fraka][\frakb]$, but for uniformly random $[\frakc]$ it fails with probability $1/2$. 

Unfortunately, the method from~\cite{breakDDH} is specific to ordinary curves: the attack proceeds by extending the base field and navigating to the floors of the $m$-isogeny volcanoes\footnote{Or rather $2$-isogeny volcanoes in case $m \in \{4, 8\}$.} of $(E, \iota)$ and $(E, \iota')$, with the goal of enforcing non-trivial cyclic rational $m^\infty$-torsion, and then recovering the character value using two Tate pairing computations. Beyond ordinary curves, it is generally impossible to turn the rational $m^\infty$-torsion cyclic using an isogeny walk, so this strategy fails. For supersingular elliptic curves over $\F_p$ with $p \equiv 1 \bmod 4$ equipped with their natural $\Z[\sqrt{-p}]$-orientation, where it suffices to consider the assigned character of modulus $m = 4$, an ad-hoc fix was given in~\cite[Thm.\,10]{breakDDH}, but it is unclear how this fix would generalize.

\subsection*{Contribution}
We give an alternative method for computing assigned character values $\chi([\fraka])$ purely from $(E, \iota)$ and $(E', \iota') = [\fraka](E, \iota)$, using the Weil pairing rather than the Tate pairing. Our approach deals with arbitrary orientations and works over arbitrary fields.
Moreover, it simplifies and often speeds up the attack from~\cite{breakDDH} in the case of ordinary elliptic curves over finite fields, as it avoids the need for navigating through isogeny volcanoes.
It also naturally incorporates the previously ad-hoc case of supersingular elliptic curves over prime fields.

The main result is easy enough to be stated right away; we recall that for an odd prime divisor $m \mid \disc(\OO)$, the assigned character of modulus $m$ is defined as
\begin{equation} \label{eq:oddprimechar} 
\chi_m : \cl(\OO) \to \{ \pm 1 \} : [\fraka] \mapsto \left( \frac{N(\fraka)}{m} \right) 
\end{equation}
where it is assumed that $[\fraka]$ is represented by an ideal $\fraka$ of norm coprime to $m$ (see our conventions further down) and $\left( \frac{\cdot}{m} \right)$ is the Legendre symbol.

\begin{theorem}\label{thm:mainthm}
Let $\OO$ be an imaginary quadratic order and let $(E, \iota), (E', \iota')$ be $\OO$-oriented elliptic curves connected by an ideal class $[\fraka] \in \cl(\OO)$. Let $m \mid \disc(\OO)$ be an odd prime divisor different from $\charac k$ and consider the assigned character $\chi_m : \cl(\OO) \to \{ \pm 1\}$ of modulus $m$.
Then $\OO$ admits a generator $\sigma$ (i.e.\ $\OO = \Z[\sigma]$) of norm coprime to $m$, and for any such $\sigma$ there exist points $P \in E[m]$, $P' \in E'[m]$ such that $\iota(\sigma)(P)$ is not a multiple of $P$, and likewise for $P'$. 
Moreover 
\[
 \chi_m([\fraka]) =  \left(\frac{a}{m} \right)
\]
with $a  = \log_{e_m(P,\iota(\sigma)(P))}e_m(P',\iota'(\sigma)(P'))$, regardless of the choice of such $\sigma, P, P'$.
\end{theorem}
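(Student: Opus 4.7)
The plan is to: (i) produce $\sigma, P, P'$ with the required properties, (ii) compute $e_m(P, \iota(\sigma)(P))$ in a Jordan basis for $\iota(\sigma)|_{E[m]}$, and (iii) relate the two sides using the horizontal isogeny associated to $[\fraka]$. The crux will be ruling out scalar action of $\iota(\sigma)$ on $E[m]$, which is where primitivity of the orientation is essential.

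Any order $\OO$ has a generator $\sigma_0$; the shifts $\sigma_0 + n$ are also generators, and since $m \mid \disc(\OO)$ with $m$ odd, the reduction of $N(\sigma_0 + n)$ modulo $m$ is a non-zero quadratic polynomial in $n$ (with a repeated root), so a suitable $n$ makes $N(\sigma)$ coprime to $m$. Next, the characteristic polynomial of $\iota(\sigma)|_{E[m]}$ equals $X^2 - \tr(\sigma)X + N(\sigma) \bmod m$, whose discriminant is $\disc(\OO) \equiv 0 \pmod m$, so there is a repeated eigenvalue $r \in \F_m$. If $\iota(\sigma)|_{E[m]}$ were the scalar $r$, then $\iota(\sigma) - r \cdot [1]_E$ would vanish on $E[m]$ and thus factor as $m \cdot \phi$ for some $\phi \in \End(E)$. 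Identifying $K$ with its image in $\End(E) \otimes \Q$ under $\iota$, this places $(\sigma - r)/m$ inside $\iota^{-1}(\End(E))$, which by primitivity equals $\OO = \Z[\sigma]$. But $(\sigma - r)/m \in \Z[\sigma]$ forces $(1 - mb)\sigma = r + ma$ for some $a, b \in \Z$, and since $\sigma \notin \Q$ this yields $m = 1$, a contradiction. Hence $\iota(\sigma)|_{E[m]}$ is a non-trivial Jordan block with one-dimensional eigenspace, and any $P$ outside this eigenspace satisfies the non-multiple condition; the same argument on $(E', \iota')$ produces $P'$.

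Fix a Jordan basis $\{e_1, e_2\}$ of $E[m]$ with $\iota(\sigma)e_1 = re_1$ and $\iota(\sigma)e_2 = e_1 + re_2$, and set $\zeta := e_m(e_1, e_2) \in \mu_m$, of order exactly $m$. Writing $P = xe_1 + ye_2$, bilinearity and alternation of $e_m$ directly yield
\[
e_m(P, \iota(\sigma)(P)) = \zeta^{-y^2},
\]
and the non-multiple condition translates to $y \in \F_m^*$. Any other Jordan basis is of the form $\{\lambda e_1, ae_1 + \lambda e_2\}$ (with $\lambda \in \F_m^*$) and produces $\zeta^{\lambda^2}$, so $\zeta$ is well-defined modulo squares, which will be harmless for the Legendre symbol. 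An entirely analogous description holds on $E'$, giving some $\zeta' \in \mu_m$.

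To relate $\zeta$ and $\zeta'$, pick a representative $\fraka$ of the class with norm $N$ coprime to $m$, and let $\varphi : E \to E'$ be the corresponding horizontal isogeny of degree $N$. Since $\gcd(N, m) = 1$, $\varphi$ restricts to an isomorphism $E[m] \to E'[m]$, and the orientation-preserving identity $\iota'(\sigma) \circ \varphi = \varphi \circ \iota(\sigma)$ ensures that $\{\varphi(e_1), \varphi(e_2)\}$ is a Jordan basis of $E'[m]$. Compatibility of the Weil pairing with $\varphi$ and $\widehat\varphi$ then gives
\[
e_m(\varphi(e_1), \varphi(e_2)) = e_m(e_1, \widehat\varphi\varphi(e_2)) = e_m(e_1, [N]e_2) = \zeta^N,
\]
so $\zeta' = \zeta^{N s^2}$ for some $s \in \F_m^*$. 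Solving $\zeta'^{-(y')^2} = (\zeta^{-y^2})^a$ in $\mu_m$ yields $a \equiv N (y' s / y)^2 \pmod m$, and therefore $\left( \frac{a}{m} \right) = \left( \frac{N}{m} \right) = \chi_m([\fraka])$, independently of all choices.
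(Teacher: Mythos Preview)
Your proof is correct and follows essentially the same approach as the paper: rule out scalar action of $\iota(\sigma)$ on $E[m]$ via primitivity of the orientation, pass to a Jordan (upper-triangular) basis to see that $e_m(P,\iota(\sigma)P)$ is determined up to a nonzero square exponent, and then push through the horizontal isogeny $\varphi_{\fraka}$ using Weil-pairing compatibility. The only differences are cosmetic --- you inline the paper's Lemmas~\ref{lem:gen1} and~\ref{lem:eigenvector}, normalize the off-diagonal Jordan entry to $1$, and track the auxiliary root of unity $\zeta=e_m(e_1,e_2)$ rather than $e_m(P,\iota(\sigma)P)$ itself --- but the substance is identical.
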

\noindent The condition that $\sigma$ be a generator of $\OO$ can be relaxed to $\sigma \in \OO \setminus ( \Z + m\OO)$. A proof of Theorem~\ref{thm:mainthm}, along with its adaptations covering assigned characters with even modulus, can be found in Section~\ref{sec:weil_attack}.
Since these results apply to arbitrary fields, they may be of independent theoretical interest.

\subsection*{Applications and implications}
From a cryptographic viewpoint, the most important consequence is that DDH should be considered broken by classical computers for essentially all elliptic curves over finite fields that are oriented by an imaginary quadratic order $\OO$ with even class number; see Section~\ref{sec:compl} for a more in-depth discussion. 

As a more surprising application, we prove in Section~\ref{sec:applicationreduction} that the new method allows to significantly improve reductions between computational problems underlying isogeny-based cryptography. On one hand, we have the problem of computing endomorphism rings of supersingular elliptic curves. It is of foundational importance to the field, as its presumed hardness is necessary for the security of essentially all isogeny-based cryptosystems~\cite{gpst,CPV20,FKM21}. Oriented versions of this \textsc{Endomorphism Ring Problem} were introduced in~\cite{Wes22a}. On the other hand, many cryptosystems relate directly to the presumably hard inversion problem for the action of the class group $\cl(\OO)$ on oriented supersingular curves: the \textsc{Vectorization Problem}. It was proved in~\cite{Wes22a} that the vectorization problem reduces to the endomorphism ring problem in polynomial time in the length of the instance and in $\# (\cl(\OO)[2])$. Unfortunately, the dependence on $\# (\cl(\OO)[2])$ means that the reduction is, in the worst case, exponential in the size of the input, since $\#(\cl(\OO)[2])$ could be as large as $D^{1/\log\log D}$, where $D = |\disc(\OO)|$. We improve this result, by proving in Section~\ref{sec:applicationreduction} that there is a reduction from the vectorization problem to the endomorphism ring problem that, in the worst case, is sub-exponential in the length of the input.

\subsection*{Conventions}
Throughout, all ideal classes $[\fraka] \in \cl(\OO)$ 
are assumed to be represented by an ideal $\fraka$ of norm coprime to $p \disc(\OO)$, where $p = \max \{ 1, \charac k \}$. Such a representative always exists, see e.g.~\cite[Cor.\,7.17]{cox}.
For an $\OO$-oriented elliptic curve $(E, \iota)$ and a point $P \in E$, we will sometimes write $\sigma(P)$ instead of $\iota(\sigma)(P)$ if $\iota$ is clear from the context. Likewise, for $[\fraka] \in \cl(\OO)$ we will sometimes write $[\fraka]E$ for the first component of $[\fraka](E, \iota)$.

\subsection*{Paper organization} Section~\ref{sec:action} provides background: it gives the full list of assigned characters of an imaginary quadratic order and it recalls how its ideal class group acts on oriented elliptic curves. Our main Section~\ref{sec:weil_attack} contains a proof of Theorem~\ref{thm:mainthm}, as well as statements and proofs for the even-modulus counterparts. Section~\ref{sec:compl} discusses the algorithmic aspects of these results, along with their implications for the decisional Diffie--Hellman problem. Finally, in Section~\ref{sec:applicationreduction} we present our improved reduction from 
the vectorization problem for oriented elliptic curves to the endomorphism ring problem.

\subsection*{Acknowledgements} The first-listed and third-listed authors are supported 
by the European Research Council (ERC) under the European Union’s Horizon 2020 research 
and innovation programme (Grant agreement No.\ 101020788 -- Adv-ERC-ISOCRYPT)
and also by CyberSecurity Research Flanders with reference number VR20192203.
The second-listed author is supported by the Research Foundation -- Flanders (FWO) under a PhD Fellowship Fundamental Research.
The fourth-listed author is supported by the Agence Nationale de la Recherche under grants ANR MELODIA (ANR-20-CE40-0013) and ANR CIAO (ANR-19-CE48-0008). We thank the anonymous reviewers for several helpful comments, and Daniel J.\ Bernstein for suggesting to use Kedlaya--Umans factorization in the proof of Theorem~\ref{thm:compl}.


\section{Background} \label{sec:action}

\subsection{Assigned characters} \label{ssec:assigned}
The following is a very brief summary of the relevant parts of~\cite[I.\S3 \& II.\S7]{cox}, to which we refer for more details.
From genus theory, we know that each order $\OO$ in an imaginary quadratic field comes equipped with an explicit list of group homomorphisms $\cl(\OO) \to \{ \pm 1\}$, called the \emph{assigned characters}, whose joint kernel is $\cl(\OO)^2$. 
Writing
\[ \disc(\OO) = -2^f d = -2^f m_1^{f_1} m_2^{f_2} \cdots m_r^{f_r} \]
for distinct odd prime numbers $m_1, \ldots, m_r$ and exponents $f \geq 0$, $f_1, \ldots, f_r \geq 1$, this list consists of
\[ \begin{array}{ll} \chi_{m_1}, \ldots, \chi_{m_r} & \text{if $f = 0$}, \\ 
\chi_{m_1}, \ldots, \chi_{m_r}, \delta & \text{if $f = 2$ and $d \equiv 1 \bmod 4$}, \\
 \chi_{m_1}, \ldots, \chi_{m_r} & \text{if $f = 2$ and $d \equiv 3 \bmod 4$}, \\
\chi_{m_1}, \ldots, \chi_{m_r}, \delta\epsilon & \text{if $f = 3$ and $d \equiv 1 \bmod 4$}, \\
\chi_{m_1}, \ldots, \chi_{m_r}, \epsilon & \text{if $f = 3$ and $d \equiv 3 \bmod 4$}, \\
\chi_{m_1}, \ldots, \chi_{m_r}, \delta & \text{if $f = 4$}, \\
\chi_{m_1}, \ldots, \chi_{m_r}, \delta, \epsilon & \text{if $f \geq 5 $}. \\
 \end{array} \]
Here $\chi_{m_i}$ is defined as in~\eqref{eq:oddprimechar}
 and 
\[ \delta : \cl(\OO) \to \{\pm 1\} : [\fraka] \mapsto (-1)^{\frac{N(\fraka) - 1}{2}}, \quad  \epsilon : \cl(\OO) \to \{\pm 1\} : [\fraka] \mapsto (-1)^{\frac{N(\fraka)^2 - 1}{8}}.
\]
Observe that $\delta \epsilon$ can be described in one go as
\[ \delta \epsilon : \cl(\OO) \to \{ \pm 1 \} : [\fraka] \mapsto (-1)^{\frac{(N(\fraka) + 2)^2 - 9}{2}}.
\]
We write $\mu \in \{r, r+1, r+2\}$ for the total number of assigned characters.

Because the joint kernel is $\cl(\OO)^2$, any character of $\cl(\OO)$ whose order divides $2$ can be written as a product of pairwise distinct assigned characters. As it turns out, there is a unique non-trivial combination that produces the trivial character:
\begin{equation} \label{eq:charrel}
  \chi_{m_1}^{f_1 \bmod 2} \chi_{m_2}^{f_2 \bmod 2} \cdots \chi_{m_r}^{f_r \bmod 2} \delta^{\frac{d+1}{2} \bmod 2} \epsilon^{f \bmod 2} = 1.
\end{equation}
Therefore, by combining assigned characters we obtain $2^{\mu - 1}$ distinct characters. Necessarily, this quantity equals the cardinality of  $\cl(\OO) / \cl(\OO)^2 \cong \cl(\OO)[2]$.

\begin{example} \label{ex:supersing} For a prime number $p \equiv 1 \bmod 4$, the ring $\Z[\sqrt{-p}]$ has two assigned characters: $\delta$ and $\chi_p$. By~\eqref{eq:charrel} these are in fact equal to each other, and non-trivial. If $p \equiv 3 \bmod 4$ then $\Z[\sqrt{-p}]$ has only one assigned character, namely $\chi_p$, and it is trivial.
\end{example}

We often make reference to the \emph{modulus} $m$ of an assigned character $\chi$,
 which is an important complexity parameter for our attack. This is simply defined to be
 \[ \left\{ \begin{array}{ll} m_i & \text{if $\chi = \chi_{m_i}$,} \\
 4 & \text{if $\chi = \delta$,} \\ 8 & \text{if $\chi = \epsilon, \delta \epsilon$.} \\ \end{array} \right. \]
 Note that $\chi([\fraka]) = \chi([\fraka'])$ as soon as $N(\fraka) \equiv N(\fraka') \bmod m$. 
Typically $m$ is the smallest positive integer with this property, but not always (e.g., as in the case of $m_i = p$ in both examples above).

\subsection{Class group action}
We now recall how the ideal class group of $\OO$ acts on $\Ell_{\OO}(k)$. This is part of the theory of complex multiplication, which is classical for $k = \C$, while for $k$ an algebraic closure of a finite field
this was elaborated in~\cite[\S3.9-12]{waterhouse}; see also~\cite{onuki} for the specifics of the supersingular case. For arbitrary $k$, we refer to Milne's course notes~\cite[\S7]{milne}. 

If $\iota$ is an $\OO$-orientation on an elliptic curve $E$ over $k$, then we can linearly extend it to a map $K \hookrightarrow \End^0(E)$,
where $\End^0(E) = \End(E) \otimes_\Z \Q$ denotes the endomorphism algebra. To each isogeny $\varphi : E \to E'$ we can naturally attach an embedding
\begin{equation*} 
  \iota_\Q : K \hookrightarrow \End^0(E') : \sigma \mapsto \frac{1}{\deg \varphi} \varphi \circ \iota(\sigma) \circ \hat{\varphi},
\end{equation*}
whose restriction to the preimage $\OO'$ of $\End(E')$
is an orientation that is called the \emph{induced orientation}, denoted by $\varphi_\ast \iota$.
We are primarily interested in isogenies $\varphi$ for which $\OO' = \OO$, in which case $\varphi$ is said to be \emph{horizontal} with respect to $\iota$.
Two $\OO$-oriented elliptic curves $(E,\iota), (E', \iota')$ are called \emph{isomorphic}, denoted $(E, \iota) \cong (E', \iota')$, if there exists an isomorphism $\varphi : E \to E'$ such that $\iota' = \varphi_\ast \iota$.

The default way to construct a horizontal isogeny is by considering
an invertible ideal $\fraka \subseteq \OO$ of norm coprime to $\max \{ 1, \charac k \}$ and attaching to it the finite subgroup
\[ E[\fraka] = \bigcap_{\alpha \in \fraka} \ker \iota(\alpha). \]
Then the separable degree-$N(\fraka)$ isogeny $\varphi_\fraka : E \to E'$ with kernel $E[\fraka]$ is horizontal. In particular $E'$ comes naturally equipped with an $\OO$-orientation $\iota' = \varphi_{\fraka \ast} \iota$. 
The pair $(E', \iota')$
is well-defined up to isomorphism and only depends on the class of $\fraka$ inside $\cl(\OO)$; we write $[\fraka](E, \iota) := (E', \iota')$. This defines the map~\eqref{eq:action}, which turns out to be a free group action. 

\begin{remark} 
In general the action is not transitive, where one subtlety is reflected in~\cite[Prop.\,3.3]{onuki}; see also the example in~\cite[\S3.1]{onuki} and the proof of~\cite[Thm.\,4.5]{schoof}. This has no consequences
for the current paper, since we are working in a single orbit, namely that of the starting curve $(E, \iota)$. 
\end{remark}

\section{Evaluating characters using the Weil pairing} \label{sec:weil_attack}

In this section we prove Theorem~\ref{thm:mainthm} and discuss its analogues for the assigned characters $\delta, \epsilon, \delta \epsilon$. In all cases it is assumed that $p = \max \{ 1, \charac k\}$ is coprime to the modulus of the character under consideration.
If $p$ is an odd prime then $\chi_p$, if it appears in the list of assigned characters, can be computed from the other characters using the relation~\eqref{eq:charrel}; see for instance Example~\ref{ex:supersing} where we had $\chi_p = \delta$.
If $p = 2$ then the same conclusion holds for $\delta$, $\epsilon$ or $\delta \epsilon$, because in even characteristic at most one of these three characters 
can appear in the list of assigned characters.\footnote{If $(E, \iota)$ is an $\OO$-oriented elliptic curve over an algebraically closed field $k$ with $\charac k = 2$, then $2^5 \nmid \disc(\OO)$. Indeed, 
if we would have $2^5 \mid \disc(\OO)$ then $E$ is necessarily supersingular, hence it concerns $y^2 + y = x^3$, the unique supersingular elliptic curve in characteristic $2$. Its endomorphism ring is isomorphic to the ring of Hurwitz quaternions $H$, and it is easy to check that every embedding $\OO \hookrightarrow H$ can be extended to an embedding $\OO' \hookrightarrow H$ with $\disc(\OO') = \disc(\OO)/4$.
See~\cite[Prop.\,3.2]{onuki} for a generalization of this observation.}

\subsection{Preliminaries}

\begin{lemma}\label{lem:gen1}
Let $\OO$ be an imaginary quadratic order and let $m$ be an odd prime number. Then $\OO=\Z[\sigma]$ for some $\sigma\in\OO$ of norm coprime to $m$.
\end{lemma}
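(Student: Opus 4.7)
The plan is to first exhibit any generator of $\OO$ as a $\Z$-algebra, and then shift it by an integer to fix the norm modulo~$m$. Since shifting by integers preserves the generated subring, the condition $\OO = \Z[\sigma]$ will be automatic.

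First, I would recall the standard structural fact that any imaginary quadratic order $\OO$, with conductor $f$ inside the maximal order $\OO_K = \Z[\omega]$ (where $\omega = \sqrt{d}$ or $(1+\sqrt{d})/2$ depending on $d \bmod 4$), can be written as $\OO = \Z + f\OO_K = \Z[f\omega]$. In particular there exists some $\tau_0 \in \OO$ with $\OO = \Z[\tau_0]$. Let $X^2 - tX + n$ denote its minimal polynomial over $\Z$, so that $t = \tr(\tau_0)$ and $n = N(\tau_0)$.

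Next, for any $k \in \Z$, the element $\sigma := \tau_0 + k$ still generates $\OO$, since $\Z[\tau_0 + k] = \Z[\tau_0]$. A direct computation gives
\[
N(\sigma) = (\tau_0 + k)(\overline{\tau_0} + k) = k^2 + tk + n.
\]
So it suffices to find $k \in \Z$ with $k^2 + tk + n \not\equiv 0 \pmod{m}$. Viewing the polynomial $f(X) = X^2 + tX + n$ over the field $\F_m$, it has at most two roots. Because $m$ is an odd prime, $|\F_m| \geq 3 > 2$, so there exists a residue class $k \in \F_m$ avoiding the roots, and this $k$ works.

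I do not expect any serious obstacle: the only place where the hypotheses are used is the bound ``$m \geq 3$'' needed to guarantee a non-root of a quadratic in $\F_m$, together with primality of $m$ so that a degree-$2$ polynomial really has at most two roots. The structural claim that every imaginary quadratic order is monogenic is classical.
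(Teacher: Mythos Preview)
Your proof is correct and follows essentially the same approach as the paper: pick any generator $\tau$ of $\OO$, shift it by an integer $k$, and use that $N(\tau+k)$ is a monic quadratic in $k$ over $\F_m$, hence has at most two roots while $|\F_m| \geq 3$. The paper's version differs only cosmetically, assuming $m\mid N(\tau)$ upfront and writing the norm as $N(\tau)+k(\tr(\tau)+k)\equiv k(\tr(\tau)+k)\bmod m$ to display the two roots explicitly.
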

\begin{proof}
Let $\tau\in\OO$ be a generator of $\OO$, suppose of norm divisible by $m$. Then for any $k\in\Z$,
\begin{equation*}
N(\tau+k)=N(\tau)+k(\tr(\tau)+k)\equiv k(\tr(\tau)+k) \bmod m.
\end{equation*}
Since $m \geq 3$ we can thus always find $k\in\Z$ such that $m\nmid N(\tau+k)$.
\end{proof}

\begin{lemma}\label{lem:gen2}
Let $\OO$ be an imaginary quadratic order of even discriminant. Then $\OO=\Z[\sigma]$ for some $\sigma\in\OO$ of odd norm.
\end{lemma}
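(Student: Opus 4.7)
The plan is to imitate the strategy of Lemma~\ref{lem:gen1} with $m = 2$, compensating for the fact that modulo $2$ there is only one nonzero residue by using the extra information supplied by the parity of $\disc(\OO)$. Concretely, I would start from an arbitrary generator $\tau\in\OO$. Since $\Z[\tau+k]=\Z[\tau]=\OO$ for every $k\in\Z$, it is enough to exhibit a shift $\sigma:=\tau+k$ whose norm is odd.

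The key observation that unlocks the argument is that the hypothesis $2\mid\disc(\OO)$ forces $\tr(\tau)$ to be even. Indeed, from $\disc(\OO)=\tr(\tau)^2-4N(\tau)$ one reads off $\tr(\tau)^2\equiv\disc(\OO)\pmod 2$, so $\tr(\tau)$ inherits the parity of $\disc(\OO)$.

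With $\tr(\tau)$ even, the identity
\[
N(\tau+k)=N(\tau)+k\tr(\tau)+k^2\equiv N(\tau)+k\pmod 2
\]
shows that $k=0$ works when $N(\tau)$ is already odd, while $k=1$ works otherwise. In either case $\sigma:=\tau+k$ is a generator of $\OO$ of odd norm, as required.

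I do not foresee any real obstacle: the argument is elementary. The only step that deserves a moment of thought is why the discriminant hypothesis is needed at all. If $\tr(\tau)$ were odd, then $k\tr(\tau)+k^2=k(k+\tr(\tau))$ would always be even, forcing $N(\tau+k)\equiv N(\tau)\pmod 2$ for every $k$ and making the shift trick useless. The even discriminant hypothesis is precisely what rules this situation out.
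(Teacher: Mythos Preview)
Your proof is correct and follows essentially the same approach as the paper. The only cosmetic difference is that the paper picks the specific purely imaginary generator $\tau=\sqrt{\disc(\OO)/4}$ (so $\tr(\tau)=0$ automatically), whereas you start from an arbitrary generator and deduce $\tr(\tau)$ even from the discriminant hypothesis; both then finish with the same shift-by-$0$-or-$1$ trick.
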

\begin{proof}
Let $\tau\in\OO$ be a purely imaginary generator of $\OO$, e.g.\ $\tau=\sqrt{\disc(\OO)/4}$, where $\disc(\OO)$ is the discriminant of $\OO$. Then $N(\tau+1)=N(\tau)+\tr(\tau)+1=N(\tau)+1$, hence we can take $\sigma = \tau$ or $\sigma = \tau + 1$.
\end{proof}

\begin{lemma}\label{lem:eigenvector}
Let $\OO$ be an imaginary quadratic order, let $(E,\iota)$ be an $\OO$-oriented elliptic curve over $k$, let $m \neq \charac k$ be a prime number, and let $\sigma\in\OO$ be a generator. Then there exists a $P\in E[m]$ such that $\iota(\sigma)(P)$ is not a multiple of $P$. 
\end{lemma}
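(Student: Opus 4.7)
The plan is to prove the contrapositive: assuming $\iota(\sigma)(P) \in \langle P \rangle$ for every $P \in E[m]$, I construct a strict super-order $\OO' \supsetneq \OO$ together with an extension $\iota' : \OO' \hookrightarrow \End(E)$ of $\iota$, thereby contradicting primitivity. Since $m \neq \charac k$, the group $E[m]$ is a $2$-dimensional $\F_m$-vector space on which $\iota(\sigma)$ acts linearly. The hypothesis forces every nonzero vector to be an eigenvector, and since $\F_m$ is a field this in turn forces $\iota(\sigma)$ to act as a single scalar $\lambda \in \Z$ on $E[m]$ (otherwise the sum of two eigenvectors with distinct eigenvalues would fail to be an eigenvector). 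Consequently $\iota(\sigma - \lambda)$ vanishes on $E[m]$ and factors through $[m]$ as $m\psi$ for a unique $\psi \in \End(E)$.

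The candidate super-order is $\OO' := \Z[\tau]$ with $\tau := (\sigma - \lambda)/m \in K$. To see that $\tau$ is an algebraic integer I compare reduced traces and degrees across the identity $m\psi = \iota(\sigma - \lambda)$: using that $\psi \in \End(E)$ has integral reduced trace and degree, and that $\iota_\Q$ preserves these, one obtains $m \mid \tr(\sigma) - 2\lambda$ and $m^2 \mid N(\sigma - \lambda)$, which are precisely the integrality conditions on the coefficients of the minimal polynomial of $\tau$. The containment $\OO \subseteq \OO'$ then follows from $\sigma = \lambda + m\tau$, and it is strict because $\sigma$ generates $\OO$: if $\tau \in \OO$ then $\sigma \in \Z + m\OO$, forcing $\OO = \Z[\sigma] \subseteq \Z + m\OO \subsetneq \OO$. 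This step is the only place where the generator hypothesis is used, which is why it can be relaxed to $\sigma \in \OO \setminus (\Z + m\OO)$ as remarked after Theorem~\ref{thm:mainthm}.

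Finally, the extension $\iota'$ is obtained by restricting $\iota_\Q : K \hookrightarrow \End^0(E)$ to $\OO'$: its image lies inside $\End(E)$ because $\tau$ maps to $\psi \in \End(E)$ by construction, and injectivity is automatic since $\iota_\Q$ is a ring map out of the field $K$. The existence of $\iota'$ then contradicts the primitivity of $\iota$, completing the plan. The subtlest step is the integrality verification for $\tau$; it is essential here that $\psi$ is an actual endomorphism rather than a mere quasi-endomorphism in $\End^0(E)$, for otherwise its reduced trace and degree would not automatically be integers, and the key divisibilities $m \mid \tr(\sigma) - 2\lambda$, $m^2 \mid N(\sigma - \lambda)$ could not be extracted.
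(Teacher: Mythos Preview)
Your proof is correct and follows essentially the same route as the paper: assume every $P\in E[m]$ is an eigenvector, deduce that $\iota(\sigma)$ acts as a scalar $\lambda$, factor $\iota(\sigma-\lambda)$ through $[m]$, and contradict primitivity via the element $(\sigma-\lambda)/m$. The only difference is that you spell out the integrality of $\tau=(\sigma-\lambda)/m$ via trace and norm, whereas the paper absorbs this into the one-line observation that $\iota_\Q((\sigma-\lambda)/m)\in\End(E)$ forces $\sigma-\lambda\in m\OO$ by primitivity (implicitly using that $\iota_\Q^{-1}(\End(E))$ is always an order in $K$).
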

\begin{proof}
The endomorphism $\iota(\sigma)$ of $E$ induces an $\F_m$-linear map $E[m]\to E[m]$. Suppose to the contrary that every $P \in E[m]$ is an eigenvector. This can only happen if the map has the full $m$-torsion $E[m]$ as an eigenspace. Thus there exists $\lambda\in\Z$ such that $E[m]\subseteq\ker(\iota(\sigma-\lambda))$. 
It then follows that $\iota_{\Q}((\sigma-\lambda)/m)\in\End(E)$, and hence that $\sigma-\lambda\in m\OO$ by the fact that $\iota$ is a primitive embedding, i.e.\ it cannot be extended to a strict superorder of $\OO$. Since $\Z+m\OO\subsetneq\OO$ this contradicts the assumption that $\sigma$ generates $\OO$.
\end{proof}

\subsection{Evaluating the characters $\chi_m$}

We now prove Theorem~\ref{thm:mainthm}.

\begin{proof}[Proof of Theorem \ref{thm:mainthm}] 
The existence of $\sigma, P, P'$ follows from Lemma~\ref{lem:gen1} and Lemma~\ref{lem:eigenvector}. The endomorphism $\iota(\sigma)$ of $E$ induces an $\F_m$-linear map $E[m]\to E[m]$. Since $m\mid\disc(\OO) = \tr(\sigma)^2 - 4N(\sigma)$ and $m \nmid N(\sigma)$, its characteristic polynomial  has a nonzero double root, say $\alpha\in\F_m^{\times}$. Consequently, we can extend to a basis $P_0,P$ of $E[m]$ for which the matrix of $\sigma$ is in upper-triangular form $\left( \begin{smallmatrix}
\alpha & \beta \\
0 & \alpha
\end{smallmatrix} \right)$
for some $\beta\in\F_m^{\times}$. With respect to this basis any $Q\in E[m]$ that is not an eigenvector of $\sigma$ is of the form $Q=\lambda P_0+\mu P$ where $\mu\neq 0$. We see that
\begin{equation*}
e_m(Q,\sigma(Q))=e_m(\lambda P_0+\mu P,(\alpha\lambda+\beta\mu)P_0+\alpha\mu P)=e_m(P,\beta P_0)^{\mu^2} = e_m(P,\sigma(P))^{\mu^2},
\end{equation*}
showing that $e_m(P,\sigma(P))$ is independent of the choice of $P$, up to raising to powers that are nonzero squares modulo $m$. Then, of course, the same conclusion applies to $e_m(P', \sigma(P'))$. 

Recall our convention from the introduction, namely that we assume that the norm of $\fraka$, which equals the degree of the corresponding isogeny $\varphi=\varphi_{\fraka}:E\to E'$, is coprime to $m$. In particular, $P_0\not\in\ker\varphi$. 
By definition of the class group action, $\iota' = \varphi_\ast \iota$ satisfies
\begin{equation*}
\iota'(\sigma)(\varphi(P))=\left(\frac{1}{\deg\varphi}\varphi\iota(\sigma)\hat{\varphi}\right)(\varphi(P))=\varphi(\iota(\sigma)(P))=\beta\varphi(P_0)+\alpha\varphi(P),
\end{equation*}
showing that $\varphi(P)$ is not an eigenvector for $\iota'(\sigma)$ acting on $([\fraka]E)[m]$. So we see that $e_m(\varphi(P), \iota'(\sigma)(\varphi(P)))$ is obtained from $e_m(P', \iota'(\sigma)(P'))$
by raising it to a nonzero square mod $m$.
To conclude, we observe that
\begin{equation*}
e_m(\varphi(P),\iota'(\sigma)(\varphi(P)))=e_m(\varphi(P),\varphi(\iota(\sigma)(P)))=e_m(P,\iota(\sigma)(P))^{\deg\varphi}.\qedhere
\end{equation*}
\end{proof}

\subsection{Evaluating $\delta$, $\epsilon$ or $\delta \epsilon$}

We now present the analogues of Theorem~\ref{thm:mainthm} for the even-modulus characters $\delta$, $\epsilon$ and $\delta \epsilon$. We first focus on $\delta$, which, as we saw in Section \ref{ssec:assigned}, is an assigned character if and only if we can write $\disc(\OO)=-4\cdot d$ for some $d\equiv 0,1\bmod 4$.

\begin{proposition}\label{prop:delta}
Assume $\charac k \neq 2$. Let
$\OO$ be an imaginary quadratic order of discriminant $-4\cdot d$ where $d\equiv 0,1\bmod 4$, and let $(E, \iota)$, $(E', \iota')$ be $\OO$-oriented elliptic curves over $k$ connected by an ideal class $[\fraka] \in \cl(\OO)$. Then $\OO$ admits an odd-norm generator $\sigma$, and for any such $\sigma$ there exist points $P\in E[4]$, $P' \in E'[4]$ such that $\iota(\sigma)(2P)\neq 2P$ and $\iota'(\sigma)(2P')\neq 2P'$. Moreover
\begin{equation*} 
\delta([\fraka]) = (-1)^{\frac{a - 1}{2}},
\end{equation*}
with $a = \log_{e_4(P,\iota(\sigma)(P))}e_4(P',\iota'(\sigma)(P'))$, for any such choice of $\sigma, P, P'$.
\end{proposition}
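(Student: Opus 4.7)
The proof should mirror that of Theorem~\ref{thm:mainthm}, with new ingredients needed for the even modulus $m = 4$. The existence of $\sigma$ is immediate from Lemma~\ref{lem:gen2}. For the existence of a valid $P \in E[4]$ (meaning $\iota(\sigma)(2P) \neq 2P$), I would show that $\iota(\sigma)$ does not act as the identity on $E[2]$ and then pick any $P$ whose double is a non-fixed 2-torsion point. If $\iota(\sigma)$ did fix $E[2]$ pointwise, then $\iota_{\Q}((\sigma-1)/2)$ would lie in $\End(E)$, forcing $(\sigma-1)/2 \in \OO = \Z[\sigma]$ by primitivity and contradicting the irrationality of $\sigma$ over $\Q$. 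The same argument gives $P'$ on $E'$.

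The heart of the proof is to analyze $\zeta_P := e_4(P, \iota(\sigma)(P))$ for valid $P$. The first step is to show that it is a primitive fourth root of unity: the compatibility $e_4(P, Q)^2 = e_2(2P, 2Q)$ gives $\zeta_P^2 = e_2(2P, \iota(\sigma)(2P)) = -1$, since $2P$ and $\iota(\sigma)(2P)$ are distinct nonzero $2$-torsion points. The main obstacle I foresee is showing that $\zeta_P$ is \emph{independent} of the valid $P$ chosen: the Jordan-form trick from Theorem~\ref{thm:mainthm} is unavailable, since in the sub-case $d \equiv 1 \bmod 4$ the characteristic polynomial of $\iota(\sigma)|_{E[4]}$ reduces to $X^2 + 1 \bmod 4$, which has no root. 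Instead, I would combine two observations. First, since $E[2] = 2 \cdot E[4]$ and the Weil pairing is multiplicative, $e_4$ restricts to the trivial pairing on $E[2] \times E[2]$; together with bilinearity and $\tr \sigma$ being even, this gives $\zeta_{P + R} = \zeta_P$ for every $R \in E[2]$, so $\zeta_P$ depends only on the class of $P$ in $E[4]/E[2]$. Validity singles out exactly two such classes, indexed by the nonzero points $T_1, T_2 \in E[2]$ not fixed by $\iota(\sigma)$, and a short $\F_2$-linear computation shows that $\iota(\sigma)$ swaps $T_1 \leftrightarrow T_2$. Second, $\iota(\sigma)(P)$ is itself a valid point lying in the opposite class; the identity $e_4(\iota(\sigma)(P), \iota(\sigma)^2(P)) = \zeta_P^{N(\sigma)}$ combined with the direct check $N(\sigma) \equiv 1 \bmod 4$ (from $\tr \sigma = 2t$ and $N(\sigma) = t^2 + d$ in both sub-cases) then gives $\zeta_{\iota(\sigma)(P)} = \zeta_P$, finishing the independence proof.

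The rest proceeds as in the proof of Theorem~\ref{thm:mainthm}. Let $\varphi = \varphi_{\fraka} : E \to E'$: since $\deg \varphi$ is coprime to $4$, $\varphi|_{E[4]}$ is injective, and $\varphi(P)$ is a valid point of $E'[4]$. Weil-pairing functoriality together with $\iota'(\sigma) \circ \varphi = \varphi \circ \iota(\sigma)$ gives
\[
e_4(\varphi(P), \iota'(\sigma)(\varphi(P))) = e_4(\varphi(P), \varphi(\iota(\sigma)(P))) = \zeta_P^{\deg \varphi} = \zeta_P^{N(\fraka)},
\]
and by the independence just established this equals $\zeta_{P'}$. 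Thus $a \equiv N(\fraka) \bmod 4$, yielding $\delta([\fraka]) = (-1)^{(N(\fraka) - 1)/2} = (-1)^{(a-1)/2}$.
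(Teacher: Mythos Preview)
Your proof is correct, and the overall architecture (existence via Lemmas~\ref{lem:gen2} and~\ref{lem:eigenvector}, independence of $\zeta_P$ from the choice of valid $P$, then the isogeny-compatibility step) matches the paper. The genuine difference lies in how you establish independence. The paper lifts the mod-$2$ upper-triangularization of $\iota(\sigma)$ to an explicit mod-$4$ matrix, splits into the two possible shapes
\[
\begin{pmatrix}\alpha&\beta\\0&\alpha\end{pmatrix}\quad\text{or}\quad\begin{pmatrix}\alpha&\beta\\2&-\alpha\end{pmatrix}\qquad(\alpha,\beta\text{ odd}),
\]
and in each case computes $e_4(Q,\sigma Q)=e_4(P,\sigma P)^{\mu^2}$ for $Q=\lambda P_0+\mu P$ with $\mu$ odd. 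You instead work coordinate-free: first, $\zeta_{P+R}=\zeta_P$ for $R\in E[2]$ (the cross terms $e_4(P,\sigma R)\,e_4(R,\sigma P)$ collapse via the adjoint identity $e_4(\sigma P,R)=e_4(P,\hat\sigma R)$ together with $\tr\sigma$ even and $R\in E[2]$, so you may want to name that compatibility explicitly), reducing to the two valid cosets in $E[4]/E[2]$; second, $\sigma$ itself carries one coset to the other and $e_4(\sigma P,\sigma^2 P)=\zeta_P^{N(\sigma)}=\zeta_P$ because $N(\sigma)\equiv 1\bmod 4$. The paper's route is more hands-on and makes the dependence on $\mu^2\equiv 1\bmod 4$ visible; yours avoids the matrix case split entirely and isolates the two clean principles (coset invariance and $\sigma$-equivariance) that make the argument work, which arguably scales more gracefully to the modulus-$8$ case treated next in the paper.
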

\begin{proof}
The existence of $\sigma, P, P'$ follows from Lemma~\ref{lem:gen2} and Lemma~\ref{lem:eigenvector}. Note that the assumption on the discriminant of $\OO$ shows that the character $\delta$ indeed exists, and that this implies that $N(\sigma)\equiv 1\bmod 4$ (since the principal ideal class $[(\sigma)]$ lies in the kernel of $\delta$).  By upper-triangularizing the action of $\sigma$ on $E[2]$ as in the proof of Theorem~\ref{thm:mainthm}, we see that there exists a $P_0\in E[4]$ such that the matrix $M_{\sigma}$ of $\sigma$ acting on $E[4]$ with respect to the basis $P_0,P$ is of the form
\begin{equation*}
M_{\sigma}\equiv \begin{pmatrix}
1 & 1\\
0 & 1
\end{pmatrix} \bmod 2.
\end{equation*}
Since $N(\sigma)\equiv 1\bmod 4$ this means that $M_{\sigma}$ is of the form either $\left( \begin{smallmatrix}
\alpha &\beta\\
0 &\alpha
\end{smallmatrix} \right)$ or $ \left( \begin{smallmatrix}
\alpha &\beta\\
2 &-\alpha
\end{smallmatrix} \right)$, with $\alpha,\beta$ odd. Any $Q$ with the property that $\sigma(2Q) \neq 2Q$ is of the form $\lambda P_0+\mu P$ where $\mu$ is odd. If $M_{\sigma}$ is of the first form we get
\begin{equation*}
e_4(Q,\sigma(Q))=e_4(\lambda P_0+\mu P,(\alpha\lambda+\beta\mu)P_0+\alpha\mu P)=e_4(P,\beta P_0)^{\mu^2} = e_4(P,\sigma(P))^{\mu^2}.
\end{equation*}
If $M_{\sigma}$ is of the second form we again get
\begin{eqnarray*}
e_4(Q,\sigma(Q)) &=& e_4(\lambda P_0+\mu P,(\alpha\lambda+\beta\mu)P_0+(2\lambda-\alpha\mu)P) \\
&=& e_4(P,\beta P_0)^{\mu^2}e_4(P,P_0)^{2(\lambda\alpha\mu-\lambda^2)} = e_4(P,\sigma( P))^{\mu^2}
\end{eqnarray*}
where the last equality uses that $\lambda, \mu, \alpha$ are odd.
From $\mu^2 \equiv 1 \bmod 4$ it follows that $e_4(P,\sigma(P))$ does not depend on the choice of $P$.
Then, of course, the same is true for $e_4(P', \sigma(P'))$.

By our convention we assume that the norm of $\fraka$, and hence the degree of the corresponding isogeny $\varphi=\varphi_{\fraka}:E\to E'$, is odd.
In particular, $2P_0\not\in\ker\varphi$ and
\begin{equation*}
\iota'(\sigma)(\varphi(2P))=\left(\frac{1}{\deg\varphi}\varphi\iota(\sigma)\hat{\varphi}\right)(\varphi(2P))=\varphi(\iota(\sigma)(2P))=\varphi(2P_0)+\varphi(2P)
\end{equation*}
is different from $\varphi(2P)$. Thus we find that $e_4(P', \sigma(P'))$ equals
\begin{equation*}
 e_4(\varphi(P),\iota'(\sigma)(\varphi(P)))=e_4(\varphi(P),\varphi(\iota(\sigma)(P)))=e_4(P,\iota(\sigma)(P))^{\deg\varphi},
\end{equation*}
which concludes the proof.
\end{proof}



Next, we discuss the modulus-$8$ characters $\epsilon$ and $\delta \epsilon$. Note that by Section \ref{ssec:assigned}, we have that $\epsilon$ is an assigned character if and only if either $2^5\mid\disc(\OO)$ or $\disc(\OO)=-2^3\cdot d$ with $d\equiv 3\bmod 4$. Similarly, $\delta\epsilon$ is an assigned character if and only if either $2^5\mid\disc(\OO)$ or $\disc(\OO)=-2^3\cdot d$ with $d\equiv 1\bmod 4$.

\begin{proposition}\label{prop:epsilon}
Assume $\charac k \neq 2$, let
$\OO$ be an imaginary quadratic order of discriminant $\disc(\OO)\equiv -2^fd$ with $d$ odd and $f\geq 3$,
and consider $\OO$-oriented elliptic curves $(E, \iota)$, $(E', \iota')$ over $k$ connected by an ideal class $[\fraka] \in \cl(\OO)$. Assume that $\epsilon$, resp.\ $\delta \epsilon$, appears among the assigned characters of $\OO$.
Then $\OO$ admits an odd-norm generator $\sigma$, and for any such $\sigma$ there exist points $P\in E[8]$, $P' \in E'[8]$ such that $\iota(\sigma)(4P)\neq 4P$ and $\iota'(\sigma)(4P')\neq 4P'$. Moreover
$\epsilon([\fraka])$, resp.\ $\delta \epsilon([\fraka])$, can be computed as
\begin{equation*} 
\epsilon([\fraka]) =   (-1)^{\frac{a^2 - 1}{8}}, \ \ resp.\
 \  \delta \epsilon([\fraka]) = (-1)^{\frac{ \left( a + 2 \right)^2 - 9}{8}},
\end{equation*}
with $a = \log_{e_8(P,\iota(\sigma)(P))}e_8(P',\iota'(\sigma)(P'))$, and for any such choice of $\sigma, P, P'$.
\end{proposition}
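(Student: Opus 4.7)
The plan is to adapt the argument of Proposition~\ref{prop:delta} by working modulo $8$ instead of modulo $4$. An odd-norm generator $\sigma$ of $\OO$ is provided by Lemma~\ref{lem:gen2}, and a point $P \in E[8]$ with $\iota(\sigma)(4P) \neq 4P$ is obtained by applying Lemma~\ref{lem:eigenvector} at $m = 2$ and lifting the resulting non-eigenvector to $E[8]$; the point $P'$ is constructed analogously.

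Following the template of Proposition~\ref{prop:delta}, choose a basis $\{P_0, P\}$ of $E[8]$ so that $4P_0$ is the unique non-trivial fixed point of $\iota(\sigma)$ on $E[2]$. Since $\sigma \notin \Z + 2\OO$ and $2 \mid \disc(\OO)$, the matrix of $\iota(\sigma)$ with respect to this basis has the form $M_\sigma \equiv \left(\begin{smallmatrix}\alpha & \beta \\ 2c & \delta\end{smallmatrix}\right) \bmod 8$ with $\alpha, \beta, \delta$ odd and $c \in \Z/4\Z$. Setting $\zeta = e_8(P_0, P)$, a primitive $8$-th root of unity, bilinearity of the Weil pairing gives, for any $Q = \lambda P_0 + \mu P$ with $\mu$ odd,
\[
e_8(Q, \iota(\sigma)(Q)) = \zeta^{2c\lambda^2 + (\delta - \alpha)\lambda\mu - \beta\mu^2}, \qquad e_8(P, \iota(\sigma)(P)) = \zeta^{-\beta}.
\]
Using $\mu^2 \equiv 1 \bmod 8$ together with the divisibility $8 \mid \disc(\OO) = (\alpha - \delta)^2 + 8c\beta$, which forces $\delta - \alpha \equiv 0 \bmod 4$, one shows that as $(\lambda, \mu)$ vary, the exponent $2c\lambda^2 + (\delta - \alpha)\lambda\mu$ assumes at most two values modulo $8$, namely $0$ and $2c + (\delta - \alpha)$. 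Passing to the discrete logarithm base $e_8(P, \iota(\sigma)(P))$, the corresponding $k$ lies in the cyclic subgroup $H = \langle N(\sigma) \bmod 8 \rangle \subseteq (\Z/8\Z)^\times$; the key identity is $1 - \beta^{-1}\bigl(2c + (\delta - \alpha)\bigr) \equiv N(\sigma) \bmod 8$, which follows from the determinant formula $N(\sigma) \equiv \alpha\delta - 2c\beta \bmod 8$ and from $\beta^2 \equiv 1 \bmod 8$.

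The transport step is identical in spirit to Proposition~\ref{prop:delta}: the horizontal isogeny $\varphi = \varphi_\fraka : E \to E'$ has odd degree $N(\fraka)$, so $\ker\varphi$ meets $E[2]$ trivially and $\varphi(P)$ qualifies as a valid choice of $P'$ in $E'[8]$. Weil-pairing compatibility yields
\[
e_8(\varphi(P), \iota'(\sigma)(\varphi(P))) = e_8(\varphi(P), \varphi(\iota(\sigma)(P))) = e_8(P, \iota(\sigma)(P))^{N(\fraka)},
\]
hence $a \equiv N(\fraka) \bmod 8$ modulo $H$. Since $(\sigma)$ is principal, every character of $\cl(\OO)$ is trivial on $N(\sigma)$; in particular $H$ is contained in the kernel of $\epsilon$ (resp.\ $\delta\epsilon$) whenever that character is assigned, so $(-1)^{(a^2-1)/8}$ and $(-1)^{((a+2)^2-9)/8}$ are independent of the choices of $\sigma, P, P'$ and recover $\epsilon([\fraka])$ and $\delta\epsilon([\fraka])$ respectively.

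The main obstacle is the well-definedness step, which splits into three regimes: $f \geq 5$ (where the refined divisibility $32 \mid \disc(\OO)$ forces $N(\sigma) \equiv 1 \bmod 8$ and hence $H = \{1\}$, so $a$ is well-defined modulo $8$ outright); $f = 3$ with $d \equiv 3 \bmod 4$ (where $N(\sigma) \equiv -1 \bmod 8$ and $H = \{\pm 1\} = \ker \epsilon$); and $f = 3$ with $d \equiv 1 \bmod 4$ (where $N(\sigma) \equiv 3 \bmod 8$ and $H = \{1, 3\} = \ker \delta\epsilon$). In each case the matching between $H$ and the kernel of the assigned character is what makes the proposition work, and verifying it requires unpacking the trace and determinant constraints on $M_\sigma$ induced by the precise $2$-adic valuation of $\disc(\OO)$.
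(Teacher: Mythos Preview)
Your argument is correct and reaches exactly the same key intermediate fact as the paper, namely that for any admissible $Q$ one has
\[
e_8(Q,\sigma(Q)) \in \bigl\{\, e_8(P,\sigma(P)),\ e_8(P,\sigma(P))^{N(\sigma)} \,\bigr\},
\]
after which principality of $(\sigma)$ forces $N(\sigma)\in\ker\chi$ for the relevant $\chi\in\{\epsilon,\delta\epsilon\}$, so $a$ is well-defined modulo the right subgroup. The route, however, is genuinely different. The paper passes to the basis $\{P,\sigma(P)\}$, where the matrix of $\sigma$ is $\left(\begin{smallmatrix}0&1\\1&0\end{smallmatrix}\right)\bmod 2$; the two parities of the admissible $Q$'s are then related by the identity $e_8(\sigma Q,\sigma^2 Q)=e_8(Q,\sigma Q)^{N(\sigma)}$, halving the work, and the remaining case is settled using $e_8(P,\sigma^2(P))^2=-1$. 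You instead stay in the $\{P_0,P\}$ basis and evaluate the Weil pairing exponent as the quadratic form $2c\lambda^2+(\delta-\alpha)\lambda\mu-\beta\mu^2$, reducing everything to the single congruence $1-\beta^{-1}\bigl(2c+(\delta-\alpha)\bigr)\equiv N(\sigma)\bmod 8$. This is more computational but arguably more transparent, since no change of basis or auxiliary pairing identity is needed.

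Two small comments. First, your justification of the key congruence (``follows from the determinant formula and $\beta^2\equiv 1$'') silently also uses $4\mid \delta-\alpha$ twice: once to get $\beta(\delta-\alpha)\equiv \delta-\alpha\bmod 8$, and once to conclude $(1+\alpha)(1-\delta)\equiv 0\bmod 8$ from $\alpha\equiv\delta\bmod 4$; you have already recorded $4\mid\delta-\alpha$, so this is only a matter of citing it. Second, the final three-regime case split is redundant: the inclusion $H\subseteq\ker\chi$ is exactly your earlier abstract observation that $\chi$ kills $N(\sigma)$, so no separate check for $f\geq 5$ versus $f=3$ is needed. (Also, avoid reusing $\delta$ for a matrix entry given its role as a character.)
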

\begin{proof}
As in the previous proof, the existence of $\sigma, P, P'$ follows from Lemma~\ref{lem:gen2} and Lemma~\ref{lem:eigenvector}. 
The main difference with the foregoing proofs is that if $Q \in E[8]$ is another point satisfying $\sigma(4Q) \neq 4Q$, then $e_8(Q, \sigma(Q))$ relates more subtly to $e_8(P, \sigma(P))$. Namely, we will argue that
\begin{equation} \label{eq:propepssuff} 
e_8(Q, \sigma(Q)) \in \left\{ e_8(P, \sigma(P)), e_8(P, \sigma(P))^{N(\sigma)} \right\}, 
\end{equation}
and then of course the same again applies to $e_8(P', \sigma(P'))$. This will then lead to the conclusion that
\[ e_8(P', \sigma(P')) \in \left\{ e_8(P, \sigma(P))^{\deg \varphi}, e_8(P, \sigma(P))^{N(\sigma) \deg \varphi} \right\}, \]
which is indeed sufficient, since the principal ideal class $[(\sigma)]$ has trivial character values. More explicitly, if $\epsilon$ exists then we must have $N(\sigma) \bmod 8 \in \{1,7\}$, while if $\delta \epsilon$ exists then we have $N(\sigma) \bmod 8 \in \{1, 3\}$.

In order to prove~\eqref{eq:propepssuff}, note that, since $N(\sigma) \equiv 1 \bmod 2$,
\begin{equation*}
\tr(\sigma)^2+4 \equiv \tr(\sigma)^2-4\cdot N(\sigma) = \disc(\OO) \equiv 0 \bmod 8,  
\end{equation*}
so that $\tr(\sigma)\equiv 2\bmod 4$. It follows that the characteristic polynomial of $\sigma$ modulo $4$ is $X^2+2X+N(\sigma)$, hence we can extend to a basis $P_0,P$ of $E[8]$ such that the matrix of $\iota(\sigma)$ acting on $E[8]$ is of the form
\begin{equation*}
M_{\sigma} \equiv \begin{cases}
\begin{pmatrix}
\alpha & \beta\\
0 & \alpha
\end{pmatrix} \bmod  4 \qquad\quad \mbox{if } N(\sigma)\equiv 1\bmod 4,\\
\begin{pmatrix}
\alpha & \beta\\
2 & \alpha
\end{pmatrix} \bmod  4 \qquad\quad \mbox{if } N(\sigma)\equiv 3\bmod 4,
\end{cases} 
\end{equation*}
with $\alpha,\beta$ odd. It follows that
\begin{equation*}
M_{\sigma}^2 \equiv \begin{cases}
\begin{pmatrix}
1 & 2\\
0 & 1
\end{pmatrix} \bmod  4 \qquad\quad \mbox{if } N(\sigma)\equiv 1\bmod 4,\\
\begin{pmatrix}
3 & 2\\
0 & 3
\end{pmatrix} \bmod  4 \qquad\quad \mbox{if } N(\sigma)\equiv 3\bmod 4.
\end{cases} 
\end{equation*}
In any case we can record that 
\begin{equation}\label{eq:PssP}
e_8(P,\sigma^2(P))^2 = e_8(P, P_0)^4 =-1.
\end{equation}
Now, with respect to the basis $P,\sigma(P)$, the matrix of $\iota(\sigma)$ acting on $E[8]$ is congruent to $\left( \begin{smallmatrix}
0 & 1 \\
1 & 0
\end{smallmatrix} \right)\bmod 2$.
Any other $Q=\lambda P+\mu\sigma(P)$ such that $\sigma(4Q)\neq 4Q$ thus has exactly one of $\lambda,\mu$ odd. We now proceed to showing~\eqref{eq:propepssuff}.
If $\mu$ is odd then we can write $\sigma(Q)=\lambda' P+\mu'\sigma(P)$ with $\lambda'$ odd, so since
\begin{equation*}
e_8(Q,\sigma(Q))^{N(\sigma)}=e_8(\sigma(Q),\sigma^2(Q))
\end{equation*}
we may reduce to the case where $\lambda$ is odd (and $\mu$ is even). For odd $\lambda$, we have
\begin{equation*}
e_8(Q,\sigma(Q))=e_8(\lambda^{-1}Q,\sigma(\lambda^{-1}Q))^{\lambda^2}=e_8(\lambda^{-1}Q,\sigma(\lambda^{-1}Q)),
\end{equation*}
hence we may further reduce to the case where $\lambda=1$. 
Now note that
\begin{eqnarray*}
e_8(P+\mu\sigma(P),\sigma(P)+\mu\sigma^2(P)) &=& e_8(P,\sigma(P))e_8(\sigma(P),\sigma^2(P))^{\mu^2}e_8(P,\sigma^2(P))^\mu\\
&=& e_8(P,\sigma(P))e_8(P,\sigma(P))^{4\frac{\mu^2}{4}N(\sigma)}e_8(P,\sigma^2(P))^{2\frac{\mu}{2}}\\
&=& e_8(P,\sigma(P))\cdot(-1)^{\frac{\mu^2}{4}}\cdot(-1)^{\frac{\mu}{2}}\\
&=& e_8(P,\sigma(P)),
\end{eqnarray*}
where in the third equality we used \eqref{eq:PssP}.
\end{proof}

\begin{remark} If $\OO$ is an imaginary quadratic order of discriminant $\disc(\OO)\equiv 0\bmod 2^5$, then
both $\epsilon$ and $\delta \epsilon$ and hence $\delta = (\delta \epsilon)\epsilon$ exist, so that $N(\sigma) \equiv 1 \bmod 8$.
In this case there is a well-defined group homomorphism $\gamma:\cl(\OO)\to(\Z/8\Z)^{\times}:[\fraka]\mapsto N(\fraka)\bmod 8$ through which $\delta, \epsilon, \delta \epsilon$ factor. This is the only situation where one can get finer-than-binary modular information about $N(\fraka)$ from $[\fraka]$; the above proof shows that we can recover $\gamma([\fraka])$ at once as 
$\log_{e_8(P,\iota(\sigma)(P))}e_8(P',\iota'(\sigma)(P'))$.
\end{remark}

\begin{remark}
In the statements of Theorem~\ref{thm:mainthm}, Proposition~\ref{prop:delta} and Proposition~\ref{prop:epsilon}, the condition that $\sigma$ be a generator of $\OO$ can in fact be relaxed to 
$\sigma \in \OO \setminus (\Z + m\OO)$ if $m$ is odd and to $\sigma \in \OO \setminus (\Z + 2\OO)$ if $m$ is even, without modifying the proofs.
\end{remark}

Wrapping up, we have given justification for Algorithm~\ref{alg:eval} below, 
evaluating an
 assigned character $\chi : \cl(\OO) \to \{ \pm 1 \}$ of modulus $m$ coprime to $\max \{ 1, \charac k\}$ in an unknown ideal class $[\fraka]$
 connecting two given $\OO$-oriented curves $(E, \iota)$ and $(E', \iota')$. 
Here, by the field of definition of $(E, \iota)$, $(E',\iota')$ we mean any (e.g., the smallest) subfield $F \subseteq k$ over which the curves $E, E'$ and the endomorphisms in $\iota(\OO), \iota'(\OO)$ are defined. 

\algrenewcommand\algorithmicrequire{\textbf{Input:}}
\algrenewcommand\algorithmicensure{\textbf{Output:}}

\begin{algorithm}
\caption{Evaluating an assigned character in an unknown ideal class}\label{alg:eval}
\begin{algorithmic}[1]
\Require 
\Statex $\OO$-oriented curves $(E, \iota)$, $(E', \iota')$ in the same orbit with field of definition $F$
\Statex an assigned character $\chi$ of $\cl(\OO)$ with modulus $m$ coprime to $\max \{ 1, \charac F\}$
\Ensure
\Statex $\chi([\fraka]) \in \{ \pm 1\}$, where $[\fraka] \in \cl(\OO)$ is such that $(E', \iota') = [\fraka](E, \iota)$  
\Statex \vspace{-0.25cm}
\State Find a generator $\sigma$ of $\OO$ of norm coprime to $m$.
\State \label{it:extdegree}Base-change to the smallest extension $\FF \supseteq F$ over which all points in $E[m]$ are defined; necessarily, then also all of $E'[m]$ is defined over $\FF$.
\State Find a point $P \in E(\FF)$ such that $E[m] = \langle P, \iota(\sigma)(P) \rangle$ and compute $\zeta = e_m(P, \iota(\sigma)(P))$.
\State Likewise, find
  a point $P' \in E'(\FF)$ such that $E'[m] = \langle P', \iota'(\sigma)(P') \rangle$ and compute $\zeta' = e_m(P', \iota'(\sigma)(P'))$.
\State \label{it:dlog}Inside $\mu_m \subseteq \FF^\times$, compute $a  =\log_\zeta \zeta'$.
\State \label{it:eval}If $m$ is an odd prime then recover $\chi([\fraka])$ as $\left( \frac{a}{m} \right)$, else recover $\chi([\fraka])$ as 
\[ (-1)^{\frac{a - 1}{2}}, \quad (-1)^{\frac{a^2 - 1}{8}}, \quad (-1)^{\frac{(a + 2)^2 - 9}{8}}, \]  
  depending on whether $\chi = \delta, \epsilon, \delta \epsilon$, respectively.
\end{algorithmic}
\end{algorithm}

\section{Complexity and consequences for DDH} \label{sec:compl}

Running Algorithm~\ref{alg:eval} in practice comes with challenges that are specific to our field of definition $F$. 
Nevertheless,  before going into a more detailed analysis of our main case of interest, namely where $F$ is a finite field, let us add some general comments to its six numbered steps:
\begin{enumerate}
  \item[1.] Very easy, by following the proof of Lemma~\ref{lem:gen1} or Lemma~\ref{lem:gen2}.
  \item[2.] The degree of $\FF / F$ is a divisor of the order of $\GL_2(\Z / m\Z)$, which is $O(m^4)$.
  \item[3.--4.] For $m$ an odd prime, the proof of Theorem~\ref{thm:mainthm} shows that the set of $m$-torsion points that are independent of their image under $\sigma$ has size $m^2 - m$. So it suffices to try $O(1)$ random points $P \in E[m]$, compute $\iota(\sigma)(P)$ and check whether $e_m(P, \iota(\sigma)(P))$ is a primitive $m$th root of unity (i.e., not $1$).\footnote{Alternatively, one may opt for a more deterministic approach by computing and analyzing a matrix of $\iota(\sigma)$ acting on $E[m]$, in which case two evaluations of $\iota(\sigma)$ will do. Note however that writing down a matrix of $\iota(\sigma)$ comes at the cost of computing some discrete logarithms.}
  \item[5.] Pollard-$\rho$ type algorithms allow us to compute the discrete logarithm using $O(\sqrt{m})$ operations in $\mu_m$.
  \item[6.] Trivial.
\end{enumerate}
The main bonus we get from working over a finite field lies in~\eqref{it:extdegree}. In this case the degree of $\FF / F$ equals the order of the Frobenius endomorphism $\pi_F$ acting on $E[m]$. While the order of $\GL_2(\Z/m\Z)$ is $O(m^4)$, the order of a single element is $O(m^2)$. 

\begin{theorem} \label{thm:compl}
Let $\OO = \Z[\sigma]$ be an imaginary quadratic order and consider two $\OO$-oriented elliptic curves $(E, \iota)$ and $(E', \iota')$ that belong to the same orbit under the action of $\cl(\OO)$, say given in Weierstrass form and connected by an unknown ideal class $[\fraka]$. Assume that $E, E', \iota(\OO), \iota'(\OO)$ are all defined over a finite field $\F_q$. Let $\chi$ be an assigned character of $\OO$ with modulus $m$ coprime to $q$.
There exists a randomized algorithm for computing $\chi([\fraka])$ that is expected to use
\begin{equation} \label{eq:compl} 
  \widetilde{O}(m^3 \log^2 q) 
\end{equation}
bit operations and $O(1)$ calls to $\iota(\sigma), \iota'(\sigma)$.
\end{theorem}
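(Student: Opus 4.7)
The plan is to run Algorithm~\ref{alg:eval} with efficient subroutines and add up the cost of each step. Step 1 is immediate from Lemmas~\ref{lem:gen1}--\ref{lem:gen2}: starting from any given generator of $\OO$ and shifting by a small integer yields $\sigma$ of norm coprime to $m$ in $\widetilde{O}(\log q)$ bit operations. For Step 2, the key quantitative input is that $\FF = \F_{q^d}$, where $d$ is the order of the Frobenius $\pi_{\F_q}$ acting on $E[m]$ as an element of $\GL_2(\F_m)$; hence $d = O(m^2)$, and each multiplication in $\FF$ costs $\widetilde{O}(m^2 \log q)$ bit operations via FFT-based arithmetic. I would construct $\FF$ by factoring the $m$-th division polynomial $\psi_m \in \F_q[X]$ (of degree $(m^2-1)/2$) with the Kedlaya--Umans algorithm and selecting any irreducible factor of suitable degree.

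For Steps 3--4, pick an $x$-coordinate from a root of $\psi_m$ and solve $y^2 = x^3 + ax + b$ by a square-root extraction (Tonelli--Shanks in $\FF$, or at worst in a quadratic extension) to obtain a point $P \in E[m](\FF)$. A single call to $\iota(\sigma)$ produces $\iota(\sigma)(P)$. By the argument inside the proof of Theorem~\ref{thm:mainthm}, the density of ``good'' points, i.e.\ those for which $\iota(\sigma)(P)$ is not a scalar multiple of $P$, is $1 - O(1/m)$, so $O(1)$ trials suffice in expectation; a bad $P$ is detected at once by checking whether $e_m(P, \iota(\sigma)(P)) = 1$. With $P$ in hand, $e_m(P, \iota(\sigma)(P))$ is evaluated by Miller's algorithm using $O(\log m)$ group operations on $E(\FF)$, and the same procedure is carried out for $(E', \iota')$.

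Step 5 is a discrete logarithm in the cyclic group $\mu_m \subseteq \FF^\times$, which via Pohlig--Hellman reduces to prime-order subgroups and is handled in $O(\sqrt{m})$ multiplications in $\FF$ by Pollard's $\rho$ (or baby-step giant-step). Step 6 is a trivial evaluation of $\chi([\fraka])$ from $a$ via the formulas in Theorem~\ref{thm:mainthm} or Propositions~\ref{prop:delta}--\ref{prop:epsilon}.

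The dominant contributions in the bit-complexity sum are the Kedlaya--Umans factorization of $\psi_m$, the square-root extraction and Miller iterations in $\FF$, and the discrete logarithm in $\mu_m$; each is a routine polynomial in $m$ and $\log q$ using the cost $\widetilde{O}(m^2 \log q)$ per operation in $\FF$. The main obstacle in writing this up is the bookkeeping: one must argue that none of the sub-steps exceeds the asserted bound, exploiting both the specific shape $d = O(m^2)$ of the extension degree and the degree of $\psi_m$. Carrying out that accounting yields the claimed expected bound $\widetilde{O}(m^3 \log^2 q)$, with $\iota(\sigma)$ and $\iota'(\sigma)$ invoked $O(1)$ times.
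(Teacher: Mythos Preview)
Your proposal is correct and follows essentially the same strategy as the paper's proof: bound $[\FF:\F_q]$ by $O(m^2)$ via the order of Frobenius in $\GL_2(\Z/m\Z)$, construct $\FF$ by Kedlaya--Umans factorization of a degree-$O(m^2)$ division polynomial, produce $m$-torsion points and compute the Weil pairings by Miller's algorithm, finish with a Pollard-$\rho$ discrete logarithm in $\mu_m$, and absorb the generator shift $\sigma\mapsto\sigma+k$ using Lemmas~\ref{lem:gen1}--\ref{lem:gen2}. The only cosmetic difference is that the paper packages the torsion-point computation into a single resultant $r_{E,m}(x)=\res_y(f_E,\Psi_{E,m})$ and claims the points as a by-product, whereas you factor $\psi_m$ and recover the $y$-coordinate by a separate square-root extraction.
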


\begin{proof}
If we write $f_E(x,y)$ for the defining Weierstrass polynomial of $E$ and $\Psi_{E, m}(x)$ for its $m$-division polynomial, then the field $\FF$ can be constructed as the splitting field
of the resultant $r_{E,m}(x) = \res_y(f_E, \Psi_{E,m})$, whose degree is $O(m^2)$. The division polynomial $\Psi_{E,m}(x)$ can be computed recursively and the resultant $r_{E, m}(x)$ can be factored 
using Kedlaya--Umans~\cite{Kedlaya-Umans}. Using fast arithmetic, this takes a combined time of~\eqref{eq:compl}. Note that we obtain all points in $E[m]$ as a by-product; once we know $\FF$ we can sample points from $E'[m]$ faster.  The Weil pairings can be computed using Miller's algorithm, taking $O(\log m)$
operations in $\FF$, and Pollard-$\rho$ takes an expected $O(\sqrt{m})$ operations in $\FF$, so
these costs are dominated by~\eqref{eq:compl}, again assuming fast arithmetic. Finally, while the norm of the given generator $\sigma$ may not be coprime to $m$, from the proofs of Lemma~\ref{lem:gen1} and Lemma~\ref{lem:gen2} we see that
we can instead work with $\sigma + k$, for some positive integer $k$ bounded by $m$. Since $\iota(\sigma + k) = \iota(\sigma) + [k]$, the overhead this causes is clearly absorbed by~\eqref{eq:compl}; and similarly for $\iota'(\sigma + k)$.
\end{proof}

The effectivity of this algorithm co-depends on how easy it is to evaluate $\iota(\sigma)$ and $\iota'(\sigma)$, which is a separate discussion that is captured by the notion of \emph{efficient representations}, see Section~\ref{ssec:endringproblem} and \cite{Wes22b} for more details. One special but interesting case is where $\iota(\sigma)$ equals $\pi_{\F_q}$, or is easily derived from it, whose cost is quasi-quadratic in $m \log q$. So, in this case, the overall cost remains estimated by~\eqref{eq:compl}. This matches with the asymptotic runtime of the Tate pairing attack from~\cite{breakDDH}, as estimated in~\cite[\S5.1]{breakDDH}.\footnote{Here and below, for simplicity, the height $h \approx \val_m(\tr(\pi_\FF)^2 - 4\#\FF)$ of the $m$-isogeny volcano of $E$ over $\FF$ is estimated by $O(1)$.} 
 
While the Weil pairing attack is conceptually simpler (no descent of the isogeny volcano needed), in general one should expect the Tate pairing attack to run faster in practice. The main reason is that there it suffices to work over a field $\FF$ such that $E$ admits an $\FF$-rational point of order $m$, rather than requiring all $m$-torsion to be $\FF$-rational (in turn, this is because the Tate pairing admits non-trivial self-pairing values, in contrast with the Weil pairing). The degree of such an extension field is bounded by $O(m)$, rather than by $O(m^2)$. 
But the comparison turns in favour of the Weil pairing as soon as $E[m] \subseteq E(\F_q)$, where no field extension is needed. Note that, here, it makes more sense to measure the cost of a call to $\iota(\sigma),\iota'(\sigma)$ by the cost of evaluating $(\pi_{\F_q} - 1)/m^s$, where $s$ is maximal such that $E[m^s] \subseteq E(\F_q)$; see~\cite[Lem.\,1]{ruck}. For this we need $s$ successive point divisions by $m$; the cost of such a division is dominated by that of finding a root of a polynomial of degree $m^2$, which can be done in time
\begin{equation} \label{eq:asquad} 
  \widetilde{O}(m^2 \log^2 q),
\end{equation}
see~\cite[\S2]{rabin}. This now becomes the dominant cost of the attack. The asymptotic cost of the Tate pairing also drops to~\eqref{eq:asquad} in this case, but the Weil pairing attack comes with less overhead.

All this aside, let us re-emphasize that the Weil pairing approach works in far greater generality: for arbitrary orientations and over any field admitting explicit computation. A proof-of-concept implementation of the new method can be found at \url{https://github.com/KULeuven-COSIC/oriented_DDH}. At the time of publication, this implementation handles the case of $\Z[\sqrt{-p}]$-oriented elliptic curves in characteristic $p \equiv 1 \bmod 4$. We intend to extend the repository in due course, by also covering the higher-degree group actions that were described in~\cite{CS21}.

\subsection*{Consequences for DDH} If $\cl(\OO)$ admits a non-trivial assigned character whose modulus $m$ is sufficiently small, say polynomially bounded by $\log \disc(\OO)$, 
and if it satisfies $\gcd(m,q) = 1$, then we can use this character to
distinguish between random triples and Diffie--Hellman triples with probability $1/2$, as explained in the introduction. So, in this case, we can consider
the decisional Diffie--Hellman problem broken for $\OO$-oriented elliptic curves over $\F_q$.
More generally, if $\cl(\OO)$ admits $s \geq 1$ independent such characters (meaning that one cannot use the relation~\eqref{eq:charrel} to rewrite one of the characters in terms of the others), then we can distinguish with probability $1 - 1/2^s$.

A sufficient condition for the existence of such a character is that $\disc(\OO)$ has at least two small odd prime factors different from $p = \charac \F_q$.\footnote{In serious cryptographic applications, one can ignore the phrase ``different from $p = \charac \F_q$". Indeed, if $p \mid \disc(\OO)$ then $E$ and $E'$ are necessarily supersingular, so if moreover $p$ is small then we can compute $\End(E)$ and $\End(E')$ by navigating through all $O(p)$ nodes of the supersingular isogeny graph.
As a result, one is skating on very thin ice (see Section~\ref{sec:applicationreduction}).} Heuristically, we expect that this applies to a density $1$ subset of all imaginary quadratic orders when ordered by the absolute value of their discriminant. This can be backed up using Mertens' third theorem;  or see~\cite[III.\S6]{tenenbaum} for more dedicated tools.

As discussed in~\cite[\S6]{breakDDH}, one can thwart the attack by restricting the class-group action to $\cl(\OO)^2$, or at least to a subgroup of $\cl(\OO)$ on which all assigned characters of small modulus have trivial evaluations. However, this may have practical consequences in terms of key generation and key validation. Moreover, we do not rule out that the attack can be modified to work for characters whose order is a larger power of $2$, e.g., in view of~\cite{BosS96,stevenhagen}. Quantumly, it is known that $2^r$-torsion subgroups, for any small fixed value of $r$, do not contribute to the hardness of the vectorization problem anyway~\cite{fusion}.
Therefore, the cleanest way out is to follow the
recommendation from~\cite[\S6]{breakDDH}, namely to only work with orientations by imaginary quadratic orders whose class number is odd. There may be constructive reasons to deviate from this, e.g., as in the OSIDH protocol~\cite{osidh} where one uses orders of large prime power conductor in an imaginary quadratic field with class number one (such orders always have even class number).

\begin{remark}
It is interesting to view Theorem~\ref{thm:compl} against the \emph{classical} decisional Diffie--Hellman problem, namely for exponentiation in a group $G = \langle g \rangle$ of some large prime order $m$. Note that exponentiation defines a free and transitive action of $(\Z / m\Z)^\times$ on the set of generators of $G$.
The Legendre symbol
\[ \chi : (\Z / m\Z)^\times \to \{ \pm 1 \} : a \mapsto \left( \frac{a}{m} \right) \]
is the unique quadratic character, of modulus $m$, and if one could cook up an efficient classical way for computing $\chi(a)$
merely from the knowledge of $g$ and $g^a$, then this would 
break DDH in this setting.
This would be a spectacular result; in general,
to the best of our knowledge, we cannot do significantly better than computing $a$ using Pollard-$\rho$ and then evaluating $\chi$ at $a$. This should be compared to steps~\ref{it:dlog}.\ and~\ref{it:eval}.\ from Algorithm~\ref{alg:eval}. In other words, one could say that classical DDH is not weakened by the existence of $\chi$ because its modulus is large. 
\end{remark}

\section{Reductions to endomorphism ring computation}\label{sec:applicationreduction}

In this section, we prove that our main result Theorem~\ref{thm:mainthm} allows to significantly improve reductions between computational problems underlying isogeny-based cryptography. 
It was proved in~\cite{Wes22a} that two such families of problems are tightly connected: there are computational reductions from action inversion problems (called $\strongOTransform$ or $\strongOUber$) to endomorphism ring computation problems (called $\OEndRing$ and $\strongOEndRing$).
However, these reductions are exponential in the worst case. In this section, we apply Theorem~\ref{thm:mainthm} to obtain reductions that are sub-exponential in the worst case, and even polynomial in many regimes of interest.
All results in this section that start with (ERH), such as Theorem~\ref{thm:transformtoendring}, assume the extended Riemann hypothesis --- precisely, the Riemann hypothesis for Hecke $L$-functions.

\subsection{The supersingular endomorphism ring problem} \label{ssec:endringproblem}
In this section, we assume that the field $k$ is an algebraic closure of a finite field of characteristic $p$, and that $p$ does not split in $\OO$, nor does it divide the conductor of $\OO$. Then,  the set $\Ell_{\OO}(k)$ is non-empty and all curves in it are supersingular; this set is often denoted by $\mathsc{SS}_{\OO}(p)$ in the literature~\cite[Prop.\,3.2]{onuki}. Recall that a curve $E/k$ is supersingular if and only if its endomorphism ring $\End(E)$ is isomorphic to a maximal order in the quaternion algebra
$$B_{p,\infty} = \left(\frac{-q,-p}{\Q}\right) = \Q + \Q i + \Q j + \Q ij,$$
with the multiplication rules $i^2 = -q$, $j^2 = -p$, and $ji = -ij$, where $q$ is a positive integer that depends on $p$.

Given a supersingular elliptic curve $E$ over $k$, the endomorphism ring problem $\EndRing$ consists in computing four endomorphisms that form a basis of $\End(E)$. There is flexibility in how these endomorphisms can be represented, but we always assume that it is an \emph{efficient representation}. As in~\cite{Wes22b}, we say that an isogeny $\varphi: E\rightarrow E'$ is given in an efficient representation if there is an algorithm to evaluate $\varphi(P)$ for any $P \in E(\F_{p^r})$ in time polynomial in the length of the representation of $\varphi$ and in $r \log(p)$. We also assume that an efficient representation of $\varphi$ has length $\Omega(\log(\deg(\varphi)))$.

This endomorphism ring problem is of foundational importance to isogeny-based cryptography: it is presumed to be hard, and this hardness is \emph{necessary} (and sometimes sufficient) for the security of essentially all isogeny-based protocols~\cite{gpst,CPV20,FKM21}. It does not, however, capture well the notion of orientation, which plays an important role in many protocols. Therefore, the following oriented variants were introduced in~\cite{Wes22a}.
Computationally, an $\OO$-orientation $\iota$ is represented by a generator $\sigma$ of $\OO$ (i.e., $\OO = \Z[\sigma]$) together with an efficient representation of the endomorphism $\iota(\sigma)$.

\begin{problem}[\protect{$\OEndRing$}] Given $(E,\iota) \in \SSO$, find a basis of $\End(E)$.
\end{problem}

\begin{problem}[\protect{$\strongOEndRing$}] Given an $\OO$-orientable curve $E$, find a basis of $\End(E)$, and an $\OO$-orientation of $E$ expressed in this basis.
\end{problem}

Clearly, $\OEndRing$ reduces to $\strongOEndRing$.

\subsection{Action inversion problems}
Many cryptosystems relate, directly or more subtly, to an inversion problem for the action of $\cl(\OO)$ on $\SSO$. In essence, given $(E,\iota)$ and $(E',\iota')$ in $\SSO$, find a class $[\mathfrak a]$ such that $(E',\iota') \cong [\mathfrak a] (E,\iota)$ (or decide that it does not exist). This is called the vectorization problem. It is too weak for many practical purposes, because knowledge of the class $[\mathfrak a]$ is not sufficient to efficiently apply its action on any other $\OO$-oriented curve. Therefore, the following stronger problem was introduced in~\cite{Wes22a}.

\begin{problem}[\protect{$\strongOTransform$}]
Given three $\OO$-oriented supersingular curves $(E,\iota),(E',\iota'),(F,\jmath) \in \SSO$, find an $\OO$-ideal $\mathfrak a$ (or decide that it does not exist) such that $(E',\iota') \cong [\mathfrak a] (E,\iota)$, and an efficient representation of $\varphi_\mathfrak a: (F,\jmath) \rightarrow [\mathfrak a] (F,\jmath)$.
\end{problem}
The security of many cryptosystems directly reduces to this problem, such as CSIDH~\cite{csidh}, CSI-FiSh~\cite{csifish}, CSURF~\cite{CSURF}, or other generalizations~\cite{CS21}.\\

One can define a similar problem where no orientation is provided for $E'$. Then, one cannot require $(E',\iota') \cong [\mathfrak a] (E,\iota)$ anymore, but one can still ask for $E' \cong [\mathfrak a]E$.
The resulting \emph{Uber isogeny problem} was introduced in~\cite{Seta}.

\begin{problem}[\protect{$\strongOUber$}]
Given two $\OO$-oriented curves $(E,\iota), (F,\jmath) \in \SSO$ and an $\OO$-orientable curve $E'$, find an $\OO$-ideal $\mathfrak a$ such that $E' \cong [\mathfrak a]E$, and an efficient representation of $\varphi_\mathfrak a : (F,\jmath) \rightarrow [\mathfrak a] (F,\jmath)$.
\end{problem}

This $\strongOUber$ problem is significantly harder than the $\strongOTransform$ problem. In fact, most isogeny-based cryptosystems reduce to an instance of $\strongOUber$~\cite{Seta}, even cryptosystems such as SIDH~\cite{jao-defeo} which, at first sight, do not seem to involve any orientation.

\subsection{Action inversion reduces to endomorphism ring}
Strengthening and generalizing a result of~\cite{CPV20}, it was proved in~\cite{Wes22a} that $\strongOTransform$ reduces to $\OEndRing$, and that $\strongOUber$ reduces to $\strongOEndRing$. Both reductions are in polynomial time in the length of the instance, and in $\# (\cl(\OO)[2])$. Unfortunately, the dependence on $\# (\cl(\OO)[2])$ means that the reduction is, in the worst case, exponential in the size of the input, since $\#(\cl(\OO)[2])$ could be as large as $D^{1/\log\log D}$, where $D = |\disc(\OO)|$. The issue is the following: given two oriented curves $(E,\iota)$ and $(E',\iota')$ as in the definition of $\strongOTransform$, the reductions first find a class $[\mathfrak a]^2$ such that $(E',\iota') \cong [\mathfrak a] (E,\iota)$. Finding $[\mathfrak a]$ from $[\mathfrak a]^2$ is a square root computation. There are $\#(\cl(\OO)[2])$  square roots of $[\mathfrak a]^2$, but only one is the correct class $[\mathfrak a]$. In~\cite{Wes22a}, one simply does an exhaustive search. Now, thanks to Theorem~\ref{thm:mainthm}, there is a much more efficient way to find the correct square root, which in the worst case is sub-exponential in $\disc(\OO)$. This is the following proposition.
Recall the $L$-notation $$L_x(\alpha) = \exp\left(O\left((\log x)^\alpha(\log \log x)^{1-\alpha}\right)\right)$$ for sub-exponential complexities.

\begin{proposition}[ERH]\label{prop:findgoodroot}
Given $\OO$ of discriminant $-D$, the factorization $D = \prod_{i = 1}^{\omega(D)}\ell_i^{e_i}$ (with $\ell_i < \ell_{i+1}$),
two $\OO$-oriented elliptic curves $(E,\iota),(E',\iota')\in\SSO$,
a basis of $\End(E)$, and an ideal class $[\mathfrak c]^2$ such that $(E',\iota') = [\mathfrak c] (E,\iota)$,
one can find the ideal class $[\mathfrak c]$ in probabilistic polynomial time in the length of the input and in\footnote{With the convention that $\max(\emptyset) = +\infty$.} 
$$\min\left(2^{\omega(D)},\max_i\left(\ell_i \mid \ell_i \leq 2^{\omega(D) - i}\right)\right) \ll \min\left(L_{D}(1/2),\#(\cl(\OO)[2]),\ell_{\omega(D)}\right).$$
\end{proposition}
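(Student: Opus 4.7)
The plan is to combine the Weil-pairing character evaluation of Theorem~\ref{thm:mainthm} with direct computation of the class-group action, using a scheduling that trades off character-evaluation cost against brute-force cost. As a setup step, I use the basis of $\End(E)$ together with standard sub-exponential algorithms (valid under ERH) to compute the structure of $\cl(\OO)$ in time $L_D(1/2)$, and in particular to extract one arbitrary square root $[\mathfrak{c}_0]$ of the given class $[\mathfrak{c}]^2$. The sought class then satisfies $[\mathfrak{c}] = [\mathfrak{c}_0 \cdot \mathfrak{t}]$ for a unique $[\mathfrak{t}] \in \cl(\OO)[2]$, and by genus theory this two-torsion group is parametrised by the $\mu - 1$ independent assigned characters (modulo the single relation~\eqref{eq:charrel}), where each prime $\ell_i$ dividing $D$ contributes $\chi_{\ell_i}$, possibly with an additional $\delta$, $\epsilon$ or $\delta\epsilon$ from the $2$-part.

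The main phase iterates through the primes $\ell_1 < \ell_2 < \cdots < \ell_{\omega(D)}$ (and the auxiliary modulus-$4$ or $-8$ characters, if relevant) in increasing order. For those $i$ where $\ell_i$ is smaller than the current candidate count, I apply Theorem~\ref{thm:mainthm} (respectively Proposition~\ref{prop:delta} or~\ref{prop:epsilon}) to compute $\chi_{\ell_i}([\mathfrak{c}])$ directly from $(E, \iota)$ and $(E', \iota')$, and combine this with the explicit value $\chi_{\ell_i}([\mathfrak{c}_0])$ (a Jacobi symbol depending only on $N(\mathfrak{c}_0)$) to deduce $\chi_{\ell_i}([\mathfrak{t}])$. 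By Theorem~\ref{thm:compl}, each evaluation costs time polynomial in $\ell_i$ and in the input length, and each independent character halves the set of candidates; the worst-case cost of this phase is thus polynomial in $\max_i(\ell_i \mid \ell_i \leq 2^{\omega(D)-i})$.

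Once the candidate set is small enough, I brute-force the remaining $[\mathfrak{t}']$: each such candidate is a product of prime ideals lying above the $\ell_j$ dividing $D$, and the action $[\mathfrak{c}_0 \mathfrak{t}'](E, \iota)$ can be computed in polynomial time using the explicit knowledge of $\End(E)$ to translate the class-group action into isogeny computations; the result is compared with $(E', \iota')$ to certify the answer. By the stopping criterion, the residual candidate count is controlled by the same quantity $\max_i(\ell_i \mid \ell_i \leq 2^{\omega(D)-i})$, so this phase fits inside the stated complexity; the alternative $2^{\omega(D)}$ branch of the $\min$ handles the regime where no condition $\ell_i \leq 2^{\omega(D)-i}$ holds and pure brute force over $\cl(\OO)[2]$ is already cheap.

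The main obstacle is showing that the two phases can simultaneously be bounded by $\min(2^{\omega(D)}, \max_i(\ell_i \mid \ell_i \leq 2^{\omega(D)-i}))$. This requires a careful analysis of how many candidates survive after each character: one must use the relation~\eqref{eq:charrel} to avoid counting a redundant character, and show that stopping at the largest index $k$ with $\ell_k \leq 2^{\omega(D)-k}$ leaves only polynomially (in $\ell_k$) many candidates to check. The final estimate $\ll L_D(1/2)$ then follows from the classical bound $\omega(D) \ll \log D / \log\log D$ together with the observation that the $i$-th smallest prime divisor of $D$ is at least the $i$-th prime, which forces $\max_i(\ell_i \mid \ell_i \leq 2^{\omega(D)-i})$ into the sub-exponential range.
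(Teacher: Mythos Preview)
Your overall strategy matches the paper's: evaluate the assigned characters $\chi_{\ell_i}$ for the small primes using Theorem~\ref{thm:mainthm}/Theorem~\ref{thm:compl}, then brute-force the remaining two-torsion ambiguity using the known endomorphism ring to evaluate the action (what the paper packages as Proposition~\ref{prop:efficientactionfromendring}). The cutoff you describe and the paper's choice of the bound $B$ amount to the same scheduling.

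There is, however, a genuine gap in your setup step. You propose to ``compute the structure of $\cl(\OO)$ in time $L_D(1/2)$'' in order to extract an initial square root $[\mathfrak c_0]$ and a basis of $\cl(\OO)[2]$. But the proposition asserts running time polynomial in the input length and in the quantity
\[
\min\!\left(2^{\omega(D)},\ \max_i\left(\ell_i \mid \ell_i \leq 2^{\omega(D)-i}\right)\right),
\]
and this quantity can be far smaller than $L_D(1/2)$: for instance if $\omega(D)$ is bounded, it is $O(1)$ and the whole algorithm must then run in time polynomial in $\log D$. Your $L_D(1/2)$ preprocessing would dominate and violate the stated bound in exactly the regimes where the proposition is strongest. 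The paper avoids this by invoking the Bosma--Stevenhagen algorithm~\cite{BosS96}, which, given the factorization of $D$ (part of the input here), computes square roots in $\cl(\OO)$ and a basis of $\cl(\OO)[2]$ in \emph{polynomial} time. Swapping your sub-exponential class-group computation for this polynomial-time $2$-part computation is not a cosmetic change; it is what makes the complexity claim go through.

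A secondary remark: your sketch of the final inequality is in the right spirit, but the paper's argument is cleaner and does not appeal to $\omega(D)\ll \log D/\log\log D$ or to the prime-counting function. It simply uses $\log_2 D \geq \sum_{i>j}\log_2 \ell_i \geq (\omega(D)-j)\log_2 \ell_j \geq (\log_2 \ell_j)^2$ at the crossover index $j$, and a similar sum when no such $j$ exists, which directly yields the $L_D(1/2)$ bound.
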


Before proving it, let us recall the following proposition from~\cite{Wes22a}.
\begin{proposition}[\protect{ERH, \cite[Proposition~9]{Wes22a}}]\label{prop:efficientactionfromendring}
Given $(E,\iota) \in \SSO$, a basis of $\End(E)$, and an $\OO$-ideal $\mathfrak a$, one can compute $[\mathfrak a] (E,\iota)$ and an efficient representation of $\varphi_{\mathfrak a} : (E,\iota) \rightarrow [\mathfrak a] (E,\iota)$ in probabilistic polynomial time in the length of the input.
\end{proposition}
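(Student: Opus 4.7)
The main idea is to use Algorithm~\ref{alg:eval} (built on Theorem~\ref{thm:mainthm} and its even-modulus counterparts) to read off as many assigned characters of the unknown class $[\mathfrak c]$ as is cheap, directly from the pair $(E,\iota),(E',\iota')$, and thereby narrow the $\#\cl(\OO)[2]$ candidate square roots of $[\mathfrak c]^2$ down to a manageable set that can then be filtered by applying the action and comparing against $(E',\iota')$ using Proposition~\ref{prop:efficientactionfromendring}.

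First, under ERH, I would compute the structure of $\cl(\OO)$ in time $L_D(1/2)$ via Hafner--McCurley, and then extract any single square root $[\mathfrak c_0]$ of the given class $[\mathfrak c]^2$. The true $[\mathfrak c]$ is of the form $[\mathfrak c_0]\cdot[\mathfrak t]$ for a unique $[\mathfrak t]\in\cl(\OO)[2]$; this already justifies the brute-force fallback bound $2^{\omega(D)}$ in the $\min$, by running through all of $\cl(\OO)[2]$ and testing each candidate with Proposition~\ref{prop:efficientactionfromendring}.

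For the tighter branch, let $j$ be the largest index with $\ell_j\le 2^{\omega(D)-j}$. I would call Algorithm~\ref{alg:eval} on $(E,\iota),(E',\iota')$ to obtain $\chi_{\ell_i}([\mathfrak c])$ for every $i\le j$, as well as the even-modulus characters $\delta,\epsilon,\delta\epsilon$ whenever they belong to the assigned list (their moduli $4,8$ are $O(1)$, so these are always cheap). By Theorem~\ref{thm:compl}, these evaluations together cost $\mathrm{poly}(\ell_j,\log q)$. In parallel, I evaluate the same characters on the explicit ideal representative of $[\mathfrak c_0]$ via Legendre symbols and the formulas recalled in Section~\ref{ssec:assigned}; this is essentially free. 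Equating $\chi([\mathfrak c])=\chi([\mathfrak c_0])\,\chi([\mathfrak t])$ for each evaluated $\chi$ pins $[\mathfrak t]$ to a specific coset of the joint kernel $K\subseteq\cl(\OO)[2]$ of the characters computed.

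Finally, I would enumerate this coset inside the computed class group and, for each candidate $[\mathfrak t]$, invoke Proposition~\ref{prop:efficientactionfromendring} to compute $[\mathfrak c_0\cdot\mathfrak t](E,\iota)$ and compare against $(E',\iota')$; the unique match is $[\mathfrak c]$. The main obstacle will be the cost analysis: one has to show that at the threshold $j$ the coset size $|K|\le 2^{\omega(D)-j+O(1)}$, when combined with the character-evaluation cost, fits into polynomial time in $\ell_j$. The key combinatorial input is that failure of the constraint one step further, $\ell_{j+1}>2^{\omega(D)-j-1}$, together with the single relation \eqref{eq:charrel} among assigned characters absorbing one degree of freedom, controls $2^{\omega(D)-j}$ by a quantity comparable to $\ell_j$ up to a polynomial factor; the sub-exponential class-group computation and square-root extraction contribute the $L_D(1/2)$ comparison term of the $\ll$-estimate separately.
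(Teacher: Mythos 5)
Your proposal does not address the statement you were asked to prove. The statement is Proposition~\ref{prop:efficientactionfromendring}, a result the paper cites from \cite[Proposition~9]{Wes22a} without reproving: given $(E,\iota)\in\SSO$, a basis of $\End(E)$, and an \emph{explicit} $\OO$-ideal $\mathfrak a$, compute the oriented curve $[\mathfrak a](E,\iota)$ and an efficient representation of the connecting isogeny $\varphi_{\mathfrak a}$ in polynomial time. There is no unknown ideal class here, no second curve $(E',\iota')$, no square to extract roots from, and no character to evaluate.

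What you have sketched is instead a proof of Proposition~\ref{prop:findgoodroot}: recovering the hidden class $[\mathfrak c]$ from its square $[\mathfrak c]^2$ by reading off assigned characters with Algorithm~\ref{alg:eval} and filtering the residual $2$-torsion ambiguity. That is an entirely different statement, and, tellingly, your argument \emph{invokes} Proposition~\ref{prop:efficientactionfromendring} as a black box to test candidate square roots. A proof of a proposition cannot use that very proposition as a subroutine; this makes the attempt circular as a proof of Proposition~\ref{prop:efficientactionfromendring}, independently of any other issue. The genuine proof of Proposition~\ref{prop:efficientactionfromendring} lives in~\cite{Wes22a} and relies on a different toolbox: the Deuring correspondence between supersingular curves and maximal orders in $B_{p,\infty}$, transporting the $\OO$-ideal $\mathfrak a$ (via the given primitive embedding $\iota$) to a left $\End(E)$-ideal, replacing it under ERH by an equivalent ideal of smooth norm, and then evaluating the corresponding chain of small-degree isogenies while pushing the orientation forward. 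None of that appears in your sketch, and nothing from Theorem~\ref{thm:mainthm} or Algorithm~\ref{alg:eval} is relevant to it.
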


\begin{proof}[Proof of Proposition~\ref{prop:findgoodroot}]
Let $B > 0$ be a bound to be tuned later. Consider the sets of prime numbers
\begin{align*}
P_1 &= \{\ell \mid \ell \text{ is an odd prime factor of } \disc(\OO) \text{ and } \ell \leq B\},\text{ and}\\
P_2 &= \{\ell \mid \ell \text{ is an odd prime factor of } \disc(\OO) \text{ and } \ell > B\}.
\end{align*}
For each $\ell \in P_1$, compute $\chi_\ell([\mathfrak c])$ in time $\ell^{O(1)}$ using Theorem~\ref{thm:compl} and the fact that $(E',\iota') = [\mathfrak c] (E,\iota)$. Now, with~\cite{BosS96}, one can compute square roots in $\cl(\OO)$ in polynomial time, so we get an ideal $\mathfrak a$ such that $[\mathfrak a]$ and $[\mathfrak c]$ differ by a two-torsion factor.
From~\cite{BosS96}, one also gets a basis of $\cl(\OO)[2]$, so we can ensure that $\chi_\ell([\mathfrak a]) = \chi_\ell([\mathfrak c])$ for each $\ell \in P_1$.
The solution is now of the form $[\mathfrak c] = [\mathfrak a][\mathfrak b]$ where $[\mathfrak b]$ is in the subgroup $G$ of $\cl(\OO)[2]$ of classes such that $\chi_\ell([\mathfrak b]) = 1$ for all $\ell \in P_1$.
Therefore, the number of remaining candidates for the class $[\mathfrak c]$ is $\#G \leq 2^{\#P_2+1}$.
These can be enumerated (from the basis of $\cl(\OO)[2]$, deduce a basis of the subgroup $G$) and checked for correctness in polynomial time using Proposition~\ref{prop:efficientactionfromendring} and the provided basis of $\End(E)$.
Overall, the running time is polynomial in $\log p$, $\log \disc(\OO)$, $B$, and $2^{\#P_2}$.
The running time follows by choosing $B = \min\left(2^{\omega(D)},\max_i\left(\ell_i \mid \ell_i \leq 2^{\omega(D) - i}\right)\right) $.

Let us prove the last inequality. 
First, $2^{\omega(D)} \ll \#(\cl(\OO)[2])$, so $B \ll \#(\cl(\OO)[2])$. Second, if $\{\ell_i \mid \ell_i \leq 2^{\omega(D) - i}\}$ is empty, then 
$2^{\omega(D) - 1} < \ell_1 \leq \ell_{\omega(D)}$ so $2^{\omega(D)} \ll \ell_{\omega(D)}$.
If it is not empty, clearly $\max_i\left(\ell_i \mid \ell_i \leq 2^{\omega(D) - i}\right) \ll \ell_{\omega(D)}$. In both cases, we deduce $B  \ll \ell_{\omega(D)}$.
Lastly, it remains to see that 
$B \ll L_{D}(1/2).$
Suppose there exists $j$ such that 
$\ell_j = \max_i\left(\ell_i \mid \ell_i \leq 2^{\omega(D) - i}\right)$. 
We have $\log_2(\ell_j) \leq \omega(D) - j$, and
$$\log_2(D) \geq \sum_{i = j+1}^{\omega(D)} \log_2(\ell_i) \geq (\omega(D) - j)\log_2(\ell_j) \geq \log_2(\ell_j)^2.$$
We deduce that $\ell_j \leq 2^{\log_2(D)^{1/2}}$, hence $B \ll L_{D}(1/2)$.
If there exists no such $j$, then
$$\log_2(D) \geq \sum_{i = 1}^{\omega(D)} \log_2(\ell_i) \geq  \sum_{i = 1}^{\omega(D)} (\omega(D) - i) = \Theta(\omega(D)^2),$$
so $2^{\omega(D)} = L_D(1/2)$, hence $B \ll L_{D}(1/2)$.
\end{proof}

The main result of this section is the following theorem.
\begin{theorem}[\protect{ERH, reduction of $\strongOTransform$ to $\OEndRing$}]\label{thm:transformtoendring}
Given an order $\OO$ of discriminant $-D$, the factorization $D = \prod_{i = 1}^{\omega(D)}\ell_i^{e_i}$ (with $\ell_i < \ell_{i+1}$),
three $\OO$-oriented elliptic curves $(E,\iota)$, $(E',\iota')$, $(F,\jmath)\in\SSO$, together with bases of $\End(E)$, $\End(E')$ and $\End(F)$, one can compute (or assert that it does not exist) an $\OO$-ideal $\mathfrak c$ such that $(E',\iota') = [\mathfrak c] (E,\iota)$ and an efficient representation of $\varphi_\mathfrak c : (F,\jmath) \rightarrow [\mathfrak c] (F,\jmath)$
in probabilistic polynomial time in the length of the input and in
$$\min\left(2^{\omega(D)},\max_i\left(\ell_i \mid \ell_i \leq 2^{\omega(D) - i}\right)\right)  \ll \min\left(L_{D}(1/2),\#(\cl(\OO)[2]),\ell_{\omega(D)}\right).$$
\end{theorem}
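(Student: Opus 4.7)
The plan is to combine three ingredients: the existing reduction of $\strongOTransform$ to $\OEndRing$ from~\cite{Wes22a}, Proposition~\ref{prop:findgoodroot}, and Proposition~\ref{prop:efficientactionfromendring}. The key idea is to replace the exhaustive search over $\cl(\OO)[2]$ which was the bottleneck in~\cite{Wes22a} (and the source of the $\#(\cl(\OO)[2])$ factor in its running time) by the character-based square-root selection provided by Proposition~\ref{prop:findgoodroot}.

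First, I would run the reduction of~\cite{Wes22a} on the input, stopping just before its final square-root extraction step. As described in the paragraph preceding Proposition~\ref{prop:findgoodroot}, up to this point it produces, in probabilistic polynomial time in the length of the input, either an ideal class $[\mathfrak{c}]^2 \in \cl(\OO)$ satisfying $(E',\iota') \cong [\mathfrak{c}](E,\iota)$, or a proof that no connecting class exists (in which case the algorithm returns failure). Next, I would feed $\OO$, the factorization of $D$, the pair $(E,\iota),(E',\iota')$, the provided basis of $\End(E)$, and this class $[\mathfrak{c}]^2$ into Proposition~\ref{prop:findgoodroot}. This returns the correct square root $[\mathfrak{c}]$ in time polynomial in the length of the input together with the claimed bound $\min(2^{\omega(D)},\max_i(\ell_i \mid \ell_i \leq 2^{\omega(D)-i}))$.

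Finally, given an ideal representative $\mathfrak{c}$ of this class (of norm coprime to $p\disc(\OO)$, which always exists by~\cite[Cor.\,7.17]{cox}), I would apply Proposition~\ref{prop:efficientactionfromendring} to $(F,\jmath)$, $\mathfrak{c}$ and the basis of $\End(F)$ to obtain an efficient representation of $\varphi_{\mathfrak{c}}:(F,\jmath) \to [\mathfrak{c}](F,\jmath)$ in probabilistic polynomial time. Summing the costs of these three steps yields the overall running time stated in the theorem, and the final inequality $\min(2^{\omega(D)},\max_i(\ldots)) \ll \min(L_D(1/2), \#(\cl(\OO)[2]), \ell_{\omega(D)})$ is exactly the one already established at the end of the proof of Proposition~\ref{prop:findgoodroot}. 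The only genuine care needed is to confirm that the~\cite{Wes22a} reduction does expose the intermediate square $[\mathfrak{c}]^2$ prior to its own terminal exhaustive search --- but this is precisely its internal structure, which is what makes Proposition~\ref{prop:findgoodroot} a drop-in replacement and thus removes the exponential factor.
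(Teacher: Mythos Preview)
Your proposal is correct and follows essentially the same approach as the paper. The only difference is granularity: where you black-box the reduction from~\cite{Wes22a} and say ``stop just before the square-root search,'' the paper unpacks that step explicitly---it invokes the Frobenius involution $\tau_p$ from~\cite[Def.\,7]{Wes22a}, uses~\cite[Prop.\,11]{Wes22a} to compute ideals $\mathfrak a,\mathfrak b$ with $\tau_p(E,\iota)=[\mathfrak a](E,\iota)$ and $\tau_p(E',\iota')=[\mathfrak b](E',\iota')$, and then cites~\cite[Lem.\,10]{Wes22a} to identify $[\mathfrak a\overline{\mathfrak b}]$ as the desired square $[\mathfrak c]^2$---thereby directly resolving the point you flagged at the end about whether the intermediate square is actually exposed.
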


\begin{remark}
This improves the result of \cite[Thm.\,2]{Wes22a} in two ways. First, the worst case is now sub-exponential: when $D$ is primorial, the running time of \cite[Thm.\,2]{Wes22a} could reach about $D^{1/\log \log D}$, while it is now always at most $L_D(1/2)$. Second, Theorem~\ref{thm:transformtoendring} is now very efficient for a new important family of discriminants: when almost all prime divisors of $D$ are small, no matter how many there are. In particular, primorial numbers (the worst case of \cite[Thm.\,2]{Wes22a}) now benefit from a polynomial time algorithm. 
\end{remark}

\begin{proof}
Thanks to Proposition~\ref{prop:findgoodroot}, the proof is a straightforward adaptation of the proof of~\cite[Thm.\,2]{Wes22a}.
Suppose we are given $(E,\iota),(E',\iota')\in\SSO$, together with $\End(E)$ and $\End(E')$.
Consider the involution $\tau_p : \SSO\rightarrow \SSO$ defined in \cite[Def.\,7]{Wes22a} as $\tau_p(E,\iota) = (E^{(p)},(\phi_p)_*\bar \iota)$, where $\bar\iota$ is the conjugate of $\iota$ (i.e., $\bar\iota(\alpha) = \iota(\overline\alpha)$ for any $\alpha \in \OO$), and $\phi_p : E\rightarrow E^{(p)}$ is the Frobenius isogeny.

Then, per \cite[Prop.\,11]{Wes22a}, one can compute $\mathfrak a$ and $\mathfrak b$ such that $\tau_p(E,\iota) = [\mathfrak a] (E,\iota)$ and $\tau_p(E',\iota') = [\mathfrak b] (E',\iota')$ in polynomial time. From \cite[Lem.\,10]{Wes22a}, the ideal class of $\mathfrak c$ is one of the $\#(\cl(\OO)[2])$ square roots of $[\mathfrak a\overline{\mathfrak b}]$. 
Therefore, the ideal $\mathfrak c$ can be found by Proposition~\ref{prop:findgoodroot} within the claimed running time.
Finally, compute an efficient representation of $\varphi_{\mathfrak c} : (F,\jmath) \rightarrow [\mathfrak c] (F,\jmath)$ in polynomial time with Proposition~\ref{prop:efficientactionfromendring}.
\end{proof}

\begin{corollary}[ERH]\label{coro:OUbertostrongOEndRing}
Given an order $\OO$ of discriminant $-D$, and the factorization $D = \prod_{i = 1}^{\omega(D)}\ell_i^{e_i}$ (with $\ell_i < \ell_{i+1}$),
 $\strongOUber$  reduces to $\strongOEndRing$ in probabilistic polynomial time in the length of the instance and in
 $$\min\left(2^{\omega(D)},\max_i\left(\ell_i \mid \ell_i \leq 2^{\omega(D) - i}\right)\right)  \ll \min\left(L_{D}(1/2),\#(\cl(\OO)[2]),\ell_{\omega(D)}\right).$$
\end{corollary}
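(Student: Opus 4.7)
The plan is to reduce $\strongOUber$ to $\strongOEndRing$ by using one oracle call to equip the orientation-less input curve with an $\OO$-orientation, and then invoking Theorem~\ref{thm:transformtoendring} as a black box.

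Given a $\strongOUber$-instance consisting of $(E,\iota),(F,\jmath)\in\SSO$ together with an $\OO$-orientable curve $E'$, I would first call the $\strongOEndRing$ oracle on $E'$ to obtain a basis of $\End(E')$ together with an $\OO$-orientation $\iota'$ of $E'$ expressed in that basis. I would then call $\strongOEndRing$ on $E$ and on $F$ as well, keeping only the returned bases of $\End(E)$ and $\End(F)$ (the orientations produced by these two calls are discarded, since $\iota$ and $\jmath$ are already given). At this point all three curves $(E,\iota)$, $(E',\iota')$, $(F,\jmath)$ are fully $\OO$-oriented with their endomorphism rings in hand, so Theorem~\ref{thm:transformtoendring} is applicable. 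If $(E',\iota')$ lies in the same $\cl(\OO)$-orbit as $(E,\iota)$, it returns an ideal $\mathfrak c$ with $(E',\iota')=[\mathfrak c](E,\iota)$ and an efficient representation of $\varphi_\mathfrak c$ acting on $(F,\jmath)$; in particular $E'\cong [\mathfrak c]E$, which is precisely what $\strongOUber$ asks for.

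Otherwise Theorem~\ref{thm:transformtoendring} asserts non-existence, which means that $(E',\iota')$ lies in the other $\cl(\OO)$-orbit of $\SSO$. As recalled in Section~\ref{sec:action} (see \cite[Prop.\,3.3]{onuki}), $\SSO$ consists of at most two $\cl(\OO)$-orbits, interchanged by the involution $(E',\iota')\mapsto (E',\bar\iota')$. I would therefore re-run Theorem~\ref{thm:transformtoendring} on the triple $(E,\iota),(E',\bar\iota'),(F,\jmath)$; since the $\strongOUber$ instance assumes that $E'$ lies in the $\cl(\OO)$-orbit of $E$ as a plain curve, this second call is guaranteed to succeed and produces a class $\mathfrak c$ with $E'\cong[\mathfrak c]E$ together with the required efficient representation of $\varphi_\mathfrak c:(F,\jmath)\to[\mathfrak c](F,\jmath)$. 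In total the reduction makes three $\strongOEndRing$ queries and at most two calls to the algorithm of Theorem~\ref{thm:transformtoendring}, plus polynomial-time bookkeeping, so the claimed complexity bound is inherited directly from that theorem.

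The main subtlety to verify is the orbit dichotomy: one has to confirm that trying the two orientations $\iota'$ and $\bar\iota'$ covers every possible case, and that a solution for the \emph{oriented} problem on one of them really answers the orientation-free $\strongOUber$ instance. Both points are immediate from the two-orbit structure of $\SSO$ and the observation that conjugating the orientation leaves the underlying curve untouched, so no further work is required beyond a clean bookkeeping of these orbit cases.
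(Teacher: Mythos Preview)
Your reduction is essentially the paper's: call $\strongOEndRing$ to obtain bases of $\End(E)$, $\End(E')$, $\End(F)$ together with an $\OO$-orientation $\iota'$ on $E'$, then feed everything into Theorem~\ref{thm:transformtoendring}. The paper's proof is a two-line sketch that defers to~\cite[Cor.\,4]{Wes22a}; your explicit case-split (retrying with the conjugate orientation $\bar\iota'$ when $(E',\iota')$ falls in the other $\cl(\OO)$-orbit) correctly spells out the one detail the paper elides, and is consistent with the orbit structure used elsewhere in Section~\ref{sec:applicationreduction}.
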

\begin{proof}
Again, this is a straightforward adaptation of \cite[Cor.\,4]{Wes22a}.
Suppose we are given $(E,\iota),(F,\jmath) \in \SSO$ and an $\OO$-orientable elliptic curve $E'$. Solving $\strongOEndRing$, one can find $\varepsilon$-bases of $\End(E)$, $\End(F)$ and $\End(E')$, and an $\OO$-orientation $\iota'$ of $E'$. The result follows from Theorem~\ref{thm:transformtoendring}. 
\end{proof}




\noindent \textbf{Conflict of interest statement.}
The authors assert that there are no conflicts of interest.\\

\noindent \textbf{Data availability statement.}
Data sharing is not applicable to this article as no datasets were generated or analysed during the current study.


\bibliography{bib}{}

\begin{thebibliography}{10}

\bibitem{csifish}
Ward Beullens, Thorsten Kleinjung, and Frederik Vercauteren.
\newblock {CSI-FiSh}: Efficient isogeny based signatures through class group
  computations.
\newblock In {\em Asiacrypt~(1)}, volume 11921 of {\em Lecture Notes in
  Computer Science}, pages 227--247. Springer, 2019.
\newblock \url{https://ia.cr/2018/485}.

\bibitem{bonehkoganwoo}
Dan Boneh, Dmitry Kogan, and Katharine Woo.
\newblock Oblivious pseudorandom functions from isogenies.
\newblock In {\em Asiacrypt (2)}, volume 12492 of {\em Lecture Notes in
  Computer Science}, pages 520--550. Springer, 2020.
\newblock \url{https://ia.cr/2020/1532}.

\bibitem{BosS96}
Wieb Bosma and Peter Stevenhagen.
\newblock On the computation of quadratic {$2$}-class groups.
\newblock {\em J. Th\'{e}or. Nombres Bordeaux}, 8(2):283--313, 1996.

\bibitem{CSURF}
Wouter Castryck and Thomas Decru.
\newblock {CSIDH} on the surface.
\newblock In {\em PQCrypto}, volume 12100 of {\em Lecture Notes in Computer
  Science}, pages 111--129. Springer, 2020.
\newblock \url{https://ia.cr/2019/1404}.

\bibitem{fusion}
Wouter Castryck, Ann Dooms, Carlo Emerencia, and Alexander Lemmens.
\newblock A fusion algorithm for solving the hidden shift problem in finite
  abelian groups.
\newblock In {\em PQCrypto}, volume 12841 of {\em Lecture Notes in Computer
  Science}, pages 133--153. Springer, 2021.
\newblock \url{https://eprint.iacr.org/2021/562}.

\bibitem{csidh}
Wouter Castryck, Tanja Lange, Chloe Martindale, Lorenz Panny, and Joost Renes.
\newblock {CSIDH}: An efficient post-quantum commutative group action.
\newblock In {\em Asiacrypt~(3)}, volume 11274 of {\em Lecture Notes in
  Computer Science}, pages 395--427. Springer, 2018.
\newblock \url{https://ia.cr/2018/383}.

\bibitem{CPV20}
Wouter Castryck, Lorenz Panny, and Frederik Vercauteren.
\newblock Rational isogenies from irrational endomorphisms.
\newblock In {\em Eurocrypt (2)}, volume 12106 of {\em Lecture Notes in
  Computer Science}, pages 523--548. Springer, 2020.
\newblock \url{https://ia.cr/2019/1202}.

\bibitem{breakDDH}
Wouter Castryck, Jana Sot\'akov\'a, and Frederik Vercauteren.
\newblock Breaking the decisional {D}iffie-{H}ellman problem for class group
  actions using genus theory.
\newblock In {\em {Crypto} (2)}, volume 12171 of {\em Lectures Notes in
  Computer Science}, pages 92--120. Springer, 2020.
\newblock \url{https://ia.cr/2020/151}.

\bibitem{CS21}
Mathilde Chenu and Benjamin Smith.
\newblock Higher-degree supersingular group actions.
\newblock In {\em MathCrypt}, J. Math. Cryptol. (to appear), 2021.
\newblock \url{https://ia.cr/2021/955}.

\bibitem{osidh}
Leonardo Col\`o and David Kohel.
\newblock Orienting supersingular isogeny graphs.
\newblock {\em J. Math. Cryptol.}, 14(1):414--437, 2020.
\newblock \url{http://nutmic2019.imj-prg.fr/confpapers/OrientIsogGraph.pdf}.

\bibitem{couveignes}
Jean-Marc Couveignes.
\newblock Hard homogeneous spaces, 1997.
\newblock {IACR} Cryptology ePrint Archive 2006/291,
  \url{https://ia.cr/2006/291}.

\bibitem{cox}
David~A. Cox.
\newblock {\em Primes of the form {$x^2 + ny^2$}: Fermat, class field theory,
  and complex multiplication}.
\newblock Pure and Applied Mathematics. Wiley, second edition, 2013.

\bibitem{dartoisdefeo}
Pierrick Dartois and Luca~De Feo.
\newblock On the security of {OSIDH}.
\newblock In {\em PKC (1)}, volume 13177 of {\em Lecture Notes in Computer
  Science}, pages 52--81. Springer, 2022.
\newblock \url{https://ia.cr/2021/1681}.

\bibitem{Seta}
Luca {De Feo}, Cyprien {Delpech de Saint Guilhem}, Tako~Boris Fouotsa,
  P{\'e}ter Kutas, Antonin Leroux, Christophe Petit, Javier Silva, and Benjamin
  Wesolowski.
\newblock S{\'e}ta: Supersingular encryption from torsion attacks.
\newblock In {\em Asiacrypt (4)}, volume 13093 of {\em Lecture Notes in
  Computer Science}, pages 249--278. Springer, 2021.
\newblock \url{https://ia.cr/2019/1291}.

\bibitem{threshold}
Luca De~Feo and Michael Meyer.
\newblock Threshold schemes from isogeny assumptions.
\newblock In {\em PKC (2)}, volume 12111 of {\em Lecture Notes in Computer
  Science}, pages 187--212. Springer, 2020.
\newblock \url{https://ia.cr/2019/1288}.

\bibitem{FKM21}
Tako~Boris Fouotsa, P{\'e}ter Kutas, Simon-Philipp Merz, and Yan~Bo Ti.
\newblock On the isogeny problem with torsion point information.
\newblock In {\em PKC (1)}, volume 13177 of {\em Lecture Notes in Computer
  Science}, pages 142--161. Springer, 2022.
\newblock \url{https://ia.cr/2021/153}.

\bibitem{gpst}
Steven~D. Galbraith, Christophe Petit, Barak Shani, and {Yan Bo} Ti.
\newblock On the security of supersingular isogeny cryptosystems.
\newblock In {\em {Asiacrypt}~(1)}, volume 10031 of {\em Lecture Notes in
  Computer Science}, pages 63--91. Springer, 2016.
\newblock \url{https://ia.cr/2016/859}.

\bibitem{jao-defeo}
David Jao and Luca {De Feo}.
\newblock Towards quantum-resistant cryptosystems from supersingular elliptic
  curve isogenies.
\newblock In {\em PQCrypto}, volume 7071 of {\em Lecture Notes in Computer
  Science}, pages 19--34. Springer, 2011.
\newblock \url{https://ia.cr/2011/506}.

\bibitem{Kedlaya-Umans}
Kiran~S. Kedlaya and Christopher Umans.
\newblock Fast polynomial factorization and modular composition.
\newblock In {\em IEEE FOCS 2008}, pages 146--155, 2008.
\newblock \url{http://users.cms.caltech.edu/~umans/papers/KU08-final.pdf}.

\bibitem{OT}
Yi-Fu Lai, Steven~D. Galbraith, and Cyprien Delpech~de Saint~Guilhem.
\newblock Compact, efficient and {UC}-secure isogeny-based oblivious transfer.
\newblock In {\em Eurocrypt (1)}, volume 12696 of {\em Lecture Notes in
  Computer Science}, pages 213--241. Springer, 2021.
\newblock \url{https://ia.cr/2020/1012}.

\bibitem{milne}
James~S. Milne.
\newblock Complex multiplication (v0.10), 2020.
\newblock \url{https://www.jmilne.org/math/CourseNotes/cm.html}.

\bibitem{onuki}
Hiroshi Onuki.
\newblock On oriented supersingular elliptic curves.
\newblock {\em Finite Fields Appl.}, 69:Paper No. 101777, 18, 2021.
\newblock \url{https://arxiv.org/abs/2002.09894}.

\bibitem{rabin}
Michael Rabin.
\newblock Probabilistic algorithms in finite fields.
\newblock {\em SIAM J.\ Comput.}, 9(2):273--280, 1980.
\newblock
  \url{http://publications.csail.mit.edu/lcs/pubs/pdf/MIT-LCS-TR-213.pdf}.

\bibitem{rostovtsevstolbunov}
Alexander Rostovtsev and Anton Stolbunov.
\newblock Public-key cryptosystem based on isogenies.
\newblock {IACR} Cryptology ePrint Archive 2006/145, 2006.
\newblock \url{https://ia.cr/2006/145}.

\bibitem{ruck}
Hans-Georg R\"{u}ck.
\newblock A note on elliptic curves over finite fields.
\newblock {\em Math. Comp.}, 49(179):301--304, 1987.

\bibitem{schoof}
Ren\'{e} Schoof.
\newblock Nonsingular plane cubic curves over finite fields.
\newblock {\em J. Combin. Theory Ser. A}, 46(2):183--211, 1987.

\bibitem{stevenhagen}
Peter Stevenhagen.
\newblock R\'{e}dei-matrices and applications.
\newblock In {\em Number theory ({P}aris, 1992--1993)}, volume 215 of {\em
  London Math. Soc. Lecture Note Ser.}, pages 245--259. Cambridge Univ. Press,
  Cambridge, 1995.

\bibitem{stolphd}
Anton Stolbunov.
\newblock Cryptographic schemes based on isogenies.
\newblock 2012.
\newblock PhD thesis.
  \url{https://ntnuopen.ntnu.no/ntnu-xmlui/bitstream/handle/11250/262577/529395_FULLTEXT01.pdf}.

\bibitem{tenenbaum}
G\'{e}rald Tenenbaum.
\newblock {\em Introduction to analytic and probabilistic number theory},
  volume 163 of {\em Graduate Studies in Mathematics}.
\newblock American Mathematical Society, Providence, RI, third edition, 2015.
\newblock Translated from the 2008 French edition by Patrick D. F. Ion.

\bibitem{waterhouse}
William~C. Waterhouse.
\newblock Abelian varieties over finite fields.
\newblock {\em Ann. Sci. \'{E}cole Norm. Sup.}, 2:521--560, 1969.

\bibitem{Wes22a}
Benjamin Wesolowski.
\newblock Orientations and the supersingular endomorphism ring problem.
\newblock In {\em Eurocrypt (3)}, volume 13277 of {\em Lecture Notes in
  Computer Science}, pages 345--371. Springer, 2022.
\newblock \url{https://ia.cr/2021/1583}.

\bibitem{Wes22b}
Benjamin Wesolowski.
\newblock The supersingular isogeny path and endomorphism ring problems are
  equivalent.
\newblock In {\em IEEE FOCS 2021}, pages 1100--1111, 2022.
\newblock \url{https://ia.cr/2021/919}.

\end{thebibliography}
\bibliographystyle{plain}



\end{document}